\newtheorem{theorem}{Theorem}[section]
\newtheorem{remark}[theorem]{ Remark}
\newtheorem{corollary}[theorem]{Corollary}
\newtheorem{definition}[theorem]{Definition}
\newcommand{\V}{\Vert}
\newcommand{\RR} {\mathbb R}
\newcommand{\pa} {\partial}
\newcommand{\Cal} {\mathcal}
\newcommand{\beq} {\begin{equation}}
\newcommand{\eeq} {\end{equation}}
\newcommand{\Vol}{\operatorname{Vol}}
\begin{document}
\title[Nodal geometry, heat diffusion and Brownian motion]{ Nodal geometry, heat diffusion and Brownian motion}
\author{Bogdan Georgiev and Mayukh Mukherjee}

\address{Max Planck Institute for Mathematics\\ Vivatsgasse 7\\ 53111 Bonn,
\\ Germany}

\email{bogeor@mpim-bonn.mpg.de}
\email{mukherjee@mpim-bonn.mpg.de}
\begin{abstract}

We use tools from $n$-dimensional Brownian motion in conjunction with the Feynman-Kac formulation of heat diffusion to study nodal geometry on a compact Riemannian manifold $M$. On one hand we extend a theorem of Lieb (see \cite{L}) and prove that 
any nodal domain $\Omega_\lambda$ almost fully contains a ball of radius $\sim \frac{1}{\sqrt{\lambda}}$, 
which is made precise by Theorem \ref{th:Large-Inscribed-Ball} below. This also gives a slight refinement of a result by Mangoubi, concerning the inradius of nodal domains (\cite{Man2}). On the other hand, we also prove that no nodal domain can be contained in a reasonably narrow tubular neighbourhood of unions of finitely many submanifolds inside $M$ (this is Theorem \ref{stein}). 
\end{abstract}
\maketitle
\section{Introduction}
We consider a compact $n$-dimensional smooth Riemannian manifold $M$, and the Laplacian (or the Laplace-Beltrami operator) $-\Delta$ on $M$\footnote{We use the analyst's sign convention, namely, $-\Delta$ is positive semidefinite.}. For an eigenvalue $\lambda$ of $-\Delta$ and a corresponding eigenfunction $\varphi_\lambda$, 
recall that a nodal domain $ \Omega_\lambda $ is a connected component of the complement of the nodal set $N_{\varphi_\lambda} := \{ x \in M : \varphi_\lambda (x) = 0\}$. In this paper, we are interested in the asymptotic geometry of a nodal domain $\Omega_\lambda$ as $ \lambda \rightarrow \infty $.

In this note we address the following two questions.

First, we start by discussing the problem of whether a nodal domain can be squeezed in a tubular neighbourhood around a certain subset $ \Sigma \subseteq M $. A result of Steinerberger (see Theorem 2 of ~\cite{St}) states that for some constant $ r_0>0 $ a nodal domain $ \Omega_\lambda $ cannot be contained in a $ 
\frac{r_0}{\sqrt{\lambda}} $-tubular neighbourhood of hypersurface $ \Sigma $, provided that $ \Sigma $ is sufficiently flat in the following sense: $ \Sigma $ must admit a unique metric projection in a wavelength (i.e. $\sim \frac{1}{\sqrt{\lambda}} $) tubular neighbourhood.
The proof involves the study of a heat process associated to the nodal domain, where one also uses estimates for Brownian motion and the Feynman-Kac formula.

We relax the conditions imposed on $ \Sigma $. Our first result is a direct extension of Theorem 2 of \cite{St}. Before stating the result, we begin with the following definition: 
\begin{definition}[Admissible Collections]\label{adm}
	For each fixed eigenvalue $ \lambda $, we consider a natural number $ m_\lambda \in \mathbb{N} $ and a collection $\Sigma_\lambda := \cup^{m_\lambda}_{i = 1} \Sigma^i_\lambda$, where $ \Sigma^i_\lambda $ is an embedded smooth submanifold (without boundary) of dimension $ k $, ($ 1\leq k \leq n-1 $).
	
	We call $\Sigma_\lambda$ admissible up to a distance $r$ if the following property is satisfied: for any $ x \in M $ with $\text{dist} (x, \Sigma_\lambda) \leq r$ there exists a unique index $ 1 \leq i_x(\lambda) \leq m_\lambda $ and a unique point $y \in \Sigma_\lambda^{i_x(\lambda)}$ realizing $\text{dist}(x, \Sigma_\lambda)$ - that is, $\text{dist} (x, y) = \text{dist}(x, \Sigma_\lambda)$.
\end{definition}

We note that if $\Sigma_\lambda$ consists of one submanifold which is admissible up to distance $r$, then Definition \ref{adm} means that $r$ is smaller than the normal injectivity radius of $\Sigma_\lambda$. Moreover, if $ \Sigma_\lambda $ consists of more submanifolds, then these submanifolds must be disjoint and the distance between every two of them must be greater than $r$.

Let us also remark that, Theorem 2 of ~\cite{St} holds true when the hypersurface $\Sigma$ is allowed to vary with respect to $\lambda$ in a controlled way, which is made precise by Definition \ref{adm}. With that clarification in place, Theorem \ref{babycase} is an extension of Theorem 2 of ~\cite{St}.

\begin{theorem}\label{babycase}
	There is a constant $r_0$ depending only on $(M, g)$ such that if a submanifold $\Sigma_\lambda \subset M$ is admissible up to distance $\frac{1}{\sqrt{\lambda}}$, then no nodal domain $\Omega_\lambda$ can be contained in a $\frac{r_0}{\sqrt{\lambda}}$-tubular neighbourhood of $\Sigma_\lambda$.
\end{theorem}


Further, it turns out that we can select $ \Sigma_\lambda $ to be a union of submanifolds of varying dimensions, having relaxed admissibility conditions.

Elaborating on this, we observe that getting entirely rid of the admissibility condition, as in Definition \ref{adm} allows situations where $\Sigma^i_\lambda$ is dense in $M$, for example, $M = \mathbb{T}^2$ and $\Sigma^1_\lambda$ being a generic geodesic. By assuming $\Sigma^i_\lambda$ is compact, we avoid such situations. Also, since we are considering unions of surfaces, the restriction of ``unique projection'' of nearby points, as in Definition \ref{adm}, makes no sense any more, and one can see that the approach of the proof of Theorem \ref{babycase} does not work.

 First, for ease of presentation, we adopt the following notation.
\begin{definition}
	Given a compact subset $K$ of $M$, let $\psi_K(t, x)$ denote the probability that a particle undergoing a Brownian motion starting at the point $x$ will reach $K$ within time $t$.
\end{definition}

We now introduce the following relaxed notion of admissibility.
\begin{definition}[$\alpha $-admissible Collections] \label{def:alpha-adm-col} \label{def:Alpha-Admissibility}
	Let $ 0 < \alpha < 1 $ be a constant. For each fixed eigenvalue $ \lambda $, we consider a natural number $ m_\lambda \in \mathbb{N} $ and a collection $\Sigma_\lambda := \cup^{m_\lambda}_{i = 1} \Sigma^i_\lambda$, where $ \Sigma^i_\lambda $ is a compact embedded smooth submanifold (without boundary) of dimension $ k_i $, ($ 1\leq k_i \leq n-1 $). Denote the respective tubular neighbourhoods by $N_\varepsilon(\Sigma^i_\lambda) := \{x \in M : \text{dist }(x, \Sigma^i_\lambda) < \varepsilon\}$, and let $N_\varepsilon(\Sigma_\lambda) = \cup^{m_\lambda}_{i = 1}N_\varepsilon(\Sigma^i_\lambda)$.
	
	We say that the collection $ \Sigma_\lambda $ is $ \alpha $-admissible, if for each sufficiently small $ \varepsilon > 0 $ and each $ x \in N_\varepsilon(\Sigma_\lambda) $ we have
	\begin{equation} \label{eq:alpha-adm}
		\psi_{\partial B(x, 2 \epsilon) \setminus N_\varepsilon(\Sigma_\lambda)} (4 \varepsilon^2, x) \geq \alpha \psi_{\partial B(x, 2 \epsilon)} (4 \varepsilon^2, x).
	\end{equation}
	
\end{definition}

Intuitively, using the above implicit formulation via Brownian motion hitting probabilities, we wish to ensure that $ N_\varepsilon(\Sigma_\lambda) $ does not occupy too large a proportion of each $ B(x, 2\epsilon) $ for $ x \in N_\varepsilon(\Sigma_\lambda) $ (cf. diagram on page \pageref{diag2} below).

In other words, we allow the family $ \Sigma_\lambda $ to intersect, but the intersections should not be ``too dense''. To illustrate the idea, let us for simplicity assume that $ M = \mathbb{R}^n $ and let us suppose that each member $ \Sigma^i_\lambda $ of the collection $ \Sigma_\lambda $ is a line passing through the origin. If the collection of these lines gets sufficiently close together or in other words ``dense'', then no matter how small $ \varepsilon > 0 $ we take, the tubular neighbourhood $ N_\varepsilon(\Sigma_\lambda) $ will contain the ball $ B(0, 2 \varepsilon) $. In particular, the left hand side of $ (\ref{eq:alpha-adm}) $ is vanishing and so, there is no $ \alpha > 0 $ for which the collection $ \Sigma_\lambda $ is $ \alpha $-admissible. Clearly, in the above example, replacing the lines $ \Sigma^i_\lambda $ by linear subspaces of varying dimensions will deliver a similar example of a collection, which is not $ \alpha $-admissible.

Having this intuition in mind, we have the following result.

\begin{theorem}\label{stein}
	Given an $ \alpha $-admissible collection $\Sigma_\lambda$, there exists a constant $C$, independent of $\lambda$, such that $N_{\frac{C}{\sqrt{\lambda}}}(\Sigma_\lambda)$ cannot fully contain a nodal domain $\Omega_\lambda$.
\end{theorem}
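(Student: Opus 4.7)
\medskip

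\noindent\textbf{Proof proposal.} The plan is to follow the Feynman--Kac strategy that underlies Steinerberger's original argument and Proposition~\ref{babycase}, and to extract the additional information needed to deal with a \emph{union} of submanifolds of possibly different dimensions under only the relaxed $\alpha$-admissibility hypothesis. The starting point is the observation that $|\varphi_\lambda|$ is a positive first Dirichlet eigenfunction of $-\Delta$ on $\Omega_\lambda$ with eigenvalue $\lambda$. Extending $|\varphi_\lambda|$ by zero outside $\Omega_\lambda$ and applying the Feynman--Kac representation for the heat semigroup on $M$ yields, for every $x \in \Omega_\lambda$ and $t>0$,
\[
e^{-\lambda t}|\varphi_\lambda(x)| \;=\; \mathbb{E}_x\!\left[|\varphi_\lambda(B_t)|\, \mathbf{1}_{\{\tau_{\Omega_\lambda} > t\}}\right],
\]
where $B_t$ is Brownian motion on $M$ and $\tau_{\Omega_\lambda}$ is its first exit time from $\Omega_\lambda$. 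Evaluating at a point $x_0$ where $|\varphi_\lambda|$ attains its maximum on $\Omega_\lambda$ gives the universal lower bound
\[
\mathbb{P}_{x_0}\!\left(\tau_{\Omega_\lambda} > t\right) \;\geq\; e^{-\lambda t}.
\]
Setting $t = c/\lambda$ for a small absolute constant $c$ makes the right side bounded below by a fixed $\delta_0 > 0$.

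The rest of the argument is to contradict this lower bound under the assumption $\Omega_\lambda \subset N_{C/\sqrt{\lambda}}(\Sigma_\lambda)$ by producing an upper bound on $\mathbb{P}_{x_0}(\tau_{\Omega_\lambda} > c/\lambda)$ that is strictly smaller than $\delta_0$ once $C$ is sufficiently small. Since exiting $N_{C/\sqrt{\lambda}}(\Sigma_\lambda)$ forces the Brownian trajectory to cross the boundary of $\Omega_\lambda$, it suffices to bound the probability that $B_s$ stays inside the tubular neighbourhood of $\Sigma_\lambda$ for the whole interval $[0,c/\lambda]$. I would handle this in two layers: first, for each individual $\alpha$-admissible submanifold piece $\Sigma_\lambda^{(i)}$, the local structure (unique projection, controlled second fundamental form on the relevant scale) allows one to straighten out $\Sigma_\lambda^{(i)}$ via normal coordinates on a ball of radius comparable to the harmonic radius, reducing the escape question to a model in $\mathbb{R}^n$ where one controls how far the normal component of an $n$-dimensional Brownian motion wanders from a $k$-plane. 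On this model, the probability that the distance to the $k$-plane remains below $C/\sqrt{\lambda}$ for time $c/\lambda$ decays like an explicit function of $C$ (via reflection principle / one-dimensional Gaussian tails for the distance process up to curvature corrections that the $\alpha$-admissibility controls).

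Second, to promote this local estimate to one valid for the whole union $\Sigma_\lambda = \bigcup_i \Sigma_\lambda^{(i)}$, I would use the strong Markov property: the Brownian path is cut into successive excursions according to which neighbourhood $N_{C/\sqrt{\lambda}}(\Sigma_\lambda^{(i)})$ it currently lies in, and at each transition the escape estimate from the preceding paragraph is re-applied. The $\alpha$-admissibility hypothesis (Definition~\ref{def:Alpha-Admissibility}) is exactly what bounds the geometric complexity of these transitions, e.g., the number of pieces that can simultaneously be close, and the angles at which they meet, so that the union bound over pieces loses only a constant factor depending on $\alpha$ and the geometry of $M$, and not on $\lambda$. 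Combining these with the time--scale choice $t=c/\lambda$ gives $\mathbb{P}_{x_0}(\tau_{\Omega_\lambda} > c/\lambda) \leq \psi(C, \alpha, M)$ with $\psi(C,\alpha,M)\to 0$ as $C\to 0$; choosing $C$ so that $\psi(C,\alpha,M) < \delta_0$ produces the contradiction and forces $\Omega_\lambda \not\subset N_{C/\sqrt{\lambda}}(\Sigma_\lambda)$.

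The main obstacle I anticipate is the second step: single-submanifold tube-escape estimates are classical, but passing to a union under only $\alpha$-admissibility requires care near \emph{multiple-projection} regions, where several sheets of $\Sigma_\lambda$ come close together, and near points where submanifolds of different dimensions meet. One must ensure that the curvature and overlap terms absorbed into the Markovian excursion bookkeeping really are controlled uniformly in $\lambda$ by the constants embedded in $\alpha$-admissibility, so that the final threshold $C$ can be chosen independently of $\lambda$. This is where Definition~\ref{def:Alpha-Admissibility} is expected to do the essential work, and where the proof will differ most substantively from the hypersurface case of \cite{St} and from Proposition~\ref{babycase}.
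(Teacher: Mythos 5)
Your first half is correct and matches the paper: the Feynman--Kac (or heat-content) upper bound $\psi_{M\setminus\Omega_\lambda}(t_0/\lambda,x_0)\le 1-e^{-t_0}$ at the max point is exactly the starting point, identical to (\ref{eq:Heat-Content-Upper-Bound}). Where things go wrong is in how you read the hypothesis and therefore in everything you build on top of it.

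Definition \ref{def:Alpha-Admissibility} is \emph{not} a geometric condition controlling ``the number of pieces that can simultaneously be close, the angles at which they meet, or curvature and overlap terms.'' It is an analytic/probabilistic condition: the paper defines $C_\lambda$ as the best constant in the monotonicity chain
\[
\psi_{M\setminus\Omega_\lambda}(t,x_0)\;\ge\;\psi_{B_{r_0/\sqrt\lambda}\setminus N_{r_0/\sqrt\lambda}(\Sigma_\lambda)}(t,x_0)\;\ge\;C_\lambda\,\psi_{\pa B_{r_0/\sqrt\lambda}}(t,x_0),
\]
and $\alpha$-admissibility simply stipulates $C_\lambda\ge\alpha$. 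So the entire lower bound
\[
\frac{\psi_{M\setminus\Omega_\lambda}(t,x_0)}{\psi_{M\setminus B_{r_0/\sqrt\lambda}}(t,x_0)}\ge\alpha
\]
is \emph{granted as a hypothesis}, not something to be derived. Your second and third ``layers'' --- local straightening in harmonic coordinates, a reflection-principle tube-escape estimate for each $\Sigma_\lambda^{(i)}$, and a strong-Markov excursion decomposition with a union bound over pieces --- are precisely an attempt to prove what the theorem assumes. That is why you (correctly) flagged the multi-projection regions as the hard part: they genuinely are hard, and the paper sidesteps them entirely by baking the conclusion of such an analysis into the definition of $\alpha$-admissibility. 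If you pursued your route you would end up proving a sufficient geometric criterion for a union to be $\alpha$-admissible, which is a different (and more ambitious) statement.

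You also miss the one quantitative move that makes the paper's proof close in a few lines: the parabolic scaling choice $t_0=r_0^2$. With this choice, the explicit Bessel series in (\ref{JKF1}) for the ball-exit probability $\psi_{M\setminus B_{r_0/\sqrt\lambda}}(t_0/\lambda,x_0)$ depends only on the ratio $t_0/r_0^2=1$ and becomes a fixed constant $\tilde C>0$, independent of $r_0$ and $\lambda$ (after invoking Theorem \ref{compare} to transfer from $\RR^n$ to $M$). The $\alpha$-admissible lower bound then forces $\alpha\le (1-e^{-t_0})/\tilde C$, and letting $t_0=r_0^2\to 0$ sends the right side to $0$, giving the contradiction and fixing the threshold $C$. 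Your reparametrization keeps $c$ fixed and sends $C\to 0$, which is not obviously equivalent and leaves you needing uniform tube-escape estimates that are nontrivial to obtain in exactly the situations $\alpha$-admissibility was designed to black-box. The fix is simple: use the hypothesis as defined, set $t_0=r_0^2$, and compare with the explicit Bessel-series constant.
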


Theorem \ref{stein} gives a strong indication as to the ``thickness'' or general shape of a nodal domain in many situations of practical interest. For example, in dimension $2$, numerics show nodal domains to look like a tubular neighbourhood of a tree. We also note that our proof of Theorem \ref{stein} reveals a bit more information, but for aesthetic reasons, we prefer to state the theorem this way.
Heuristically, the proof reveals that the nodal domain $\Omega_\lambda$ is thicker at the points where the eigenfunction $\varphi_\lambda$ attains its maximum, or at points where $\varphi_\lambda (x) \geq \beta\text{max}_{y \in \Omega_\lambda}|\varphi_\lambda (y)|$, for a fixed constant $\beta > 0$. 

Second, we study the problem of how large a ball one may inscribe in a nodal domain $ \Omega_\lambda $ at a point where the eigenfunction achieves extremal values on $ \Omega_\lambda $. We show

\begin{theorem} \label{th:Large-Inscribed-Ball}
Let $\dim M \geq 3, \epsilon_0 > 0 $ be fixed and $ x_0 \in \Omega_\lambda $ be such that $ |\varphi_\lambda(x_0)| = max_{\Omega_\lambda}|\varphi_\lambda| $. There exists $ r_0 = r_0 (\epsilon_0) $, such that
	\begin{equation}\label{Vol}
		\frac{\Vol\left( B(x_0, r_0 \lambda^{-1/2}) \cap \Omega_\lambda \right)}{\Vol\left( B(x_0, r_0 \lambda^{-1/2})\right)} \geq 1 - \epsilon_0.
	\end{equation}
\end{theorem}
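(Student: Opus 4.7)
The plan is to combine the Feynman--Kac representation of the Dirichlet semigroup on $\Omega_\lambda$ with an occupation-time lower bound on Brownian hitting probabilities, the latter being where the hypothesis $\dim M \geq 3$ will enter. First I would observe that, after replacing $\varphi_\lambda$ by $-\varphi_\lambda$ if necessary, $\varphi_\lambda|_{\Omega_\lambda}$ is the (positive) first Dirichlet eigenfunction of $\Omega_\lambda$, so $\varphi_\lambda(x_0) = \max_{\Omega_\lambda}\varphi_\lambda =: M > 0$. Letting $W_s$ denote Brownian motion on $M$ started at $x_0$ and $\tau := \inf\{s : W_s \notin \Omega_\lambda\}$ the first exit time, Feynman--Kac gives $\varphi_\lambda(x_0) = e^{\lambda t}\mathbb{E}_{x_0}[\varphi_\lambda(W_t)\,\mathbf{1}_{\tau > t}]$, from which
\[
\mathbb{P}_{x_0}[\tau > t] \geq e^{-\lambda t}.
\]
Specialising to the wavelength time-scale $t = r_0^2/\lambda$ yields $\mathbb{P}_{x_0}[\tau \leq r_0^2/\lambda] \leq 1 - e^{-r_0^2} \leq r_0^2$ for small $r_0$.

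Next I would argue by contradiction: suppose (\ref{Vol}) fails, i.e.\ $E := B_{r_0} \cap \Omega_\lambda^c$ satisfies $\Vol(E) \geq \epsilon_0\Vol(B_{r_0})$, and aim to show that Brownian motion started from $x_0$ must hit $E$ (and so exit $\Omega_\lambda$) before time $t = r_0^2/\lambda$ with probability bounded below by a fixed multiple of $\epsilon_0$, contradicting the bound just obtained. The mechanism is the standard occupation-time inequality
\[
\mathbb{E}_{x_0}\Bigl[\int_0^t \mathbf{1}_E(W_s)\,ds\Bigr] \leq t\,\mathbb{P}_{x_0}[T_E \leq t],
\]
where $T_E$ is the hitting time of $E$. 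Using the on-diagonal Gaussian lower bound $p_s^M(x_0, y) \geq c\, s^{-n/2} e^{-C d(x_0, y)^2/s}$, valid at small scales on the compact manifold $M$, integration in $s \in [0,t]$ gives $\int_0^t p_s^M(x_0, y)\,ds \geq c_1\, d(x_0, y)^{2-n}$ for $d(x_0, y) \leq \sqrt{t}$; this is precisely the step where $n \geq 3$ is used. Since every $y \in E$ lies within $\sqrt{t} = r_0/\sqrt{\lambda}$ of $x_0$ and the negative power $d^{2-n}$ attains its minimum on $E$ at $d = \sqrt{t}$, one obtains
\[
\mathbb{E}_{x_0}\Bigl[\int_0^t \mathbf{1}_E(W_s)\,ds\Bigr] \geq c_1\, \Vol(E)\, t^{(2-n)/2} \geq c_2\, \epsilon_0\, t,
\]
using $\Vol(E) \geq \epsilon_0 \Vol(B_{r_0})$ and $\Vol(B_{r_0}) \sim t^{n/2}$. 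Hence $\mathbb{P}_{x_0}[T_E \leq t] \geq c_2\epsilon_0$. Since $\{T_E \leq t\} \subseteq \{\tau \leq t\}$, combining with $\mathbb{P}_{x_0}[\tau \leq t] \leq r_0^2$ forces $c_2\epsilon_0 \leq r_0^2$; choosing $r_0 = r_0(\epsilon_0) < \sqrt{c_2\epsilon_0}$ contradicts the assumption, so (\ref{Vol}) must hold for such $r_0$.

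The main obstacle will be justifying the Gaussian heat-kernel lower bound at the wavelength scale $r_0/\sqrt{\lambda}$ uniformly in $\lambda$. Since this scale tends to $0$ as $\lambda \to \infty$, the estimate does belong to the short-time regime where uniform Li--Yau type bounds for the minimal heat kernel are available on the compact manifold $M$, but one must verify that the constants $c, C$ appearing above depend only on the geometry of $M$ and not on $\lambda$. A secondary point worth flagging is that the argument genuinely fails in dimension $2$, where the time-integrated kernel is logarithmic in $d(x_0, y)$ rather than a negative power, consistent with the restriction $\dim M \geq 3$ in the statement.
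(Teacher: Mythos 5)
Your argument is correct and reaches the stated conclusion, but by a genuinely different route than the paper. Both proofs begin with the same Feynman--Kac estimate $\mathbb{P}_{x_0}[\tau \leq t_0/\lambda] \leq 1 - e^{-t_0}$ at the wavelength time-scale $t_0 = r_0^2$. The paper then produces the competing lower bound by invoking the Grigor'yan--Saloff-Coste hitting-probability estimate, $\psi_{E}(t, x_0) \gtrsim \operatorname{cap}(E)\,r^2/\Vol(B_{r_0})$, together with the isocapacitary inequality $\operatorname{cap}(E) \gtrsim \Vol(E)^{(n-2)/n}$; you replace this pair with a bare occupation-time inequality and a Li--Yau Gaussian lower bound for the heat kernel, which is more elementary and bypasses all the capacity machinery. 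Two further remarks. First, your closing worry about dimension~$2$ is misplaced: the quantity you actually need is $\inf_{d(x_0,y)\leq \sqrt{t}}\int_0^t p_s(x_0,y)\,ds$, which by scaling equals $c_n t^{1-n/2}$ for a positive constant $c_n$ in \emph{every} dimension $n \geq 1$ (for $n = 2$ this is a bounded positive constant, not zero), and multiplying by $\Vol(E)\geq\epsilon_0\Vol(B_{r_0})\sim \epsilon_0 t^{n/2}$ still yields $\mathbb{E}_{x_0}\bigl[\int_0^t\mathbf{1}_E\bigr]\gtrsim \epsilon_0 t$. The hypothesis $\dim M \geq 3$ in the theorem reflects the paper's use of non-parabolicity and the form of the isocapacitary inequality; your argument removes this restriction. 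Second, your method yields a slightly weaker quantitative rate: discarding the factor $d(x_0,y)^{2-n}$ (large for $y$ near $x_0$) in favour of its infimum over $E$ gives $\Vol(E)/\Vol(B_{r_0}) \lesssim r_0^2$, whereas the paper's capacity route tracks this and obtains $\Vol(E)/\Vol(B_{r_0}) \lesssim r_0^{2n/(n-2)}$ (noted in a remark following the proof). Either rate suffices for the theorem as stated, so the proposal stands.
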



A celebrated theorem of Lieb (see \cite{L}) considers the case of a domain $ \Omega \subset \mathbb{R}^n $ and states that there exists a point $ x_0 \in \Omega $, where a ball of radius $ \frac{C}{\sqrt{\lambda_1(\Omega)}} $ can almost be inscribed (in the sense of our Theorem \ref{th:Large-Inscribed-Ball}). A further generalization was obtained in the paper ~\cite{MS} (see, in particular, Theorem 1.1 and Subsection 5.1 of ~\cite{MS}). However, the point $ x_0 $ was not specified. Physically, one expects that $ x_0 $ is close to the point where the first Dirichlet eigenfunction of $ \Omega $ attains extremal values. This is in fact the essential statement of Theorem \ref{th:Large-Inscribed-Ball} above. 
Also, in this context, it is illuminating to compare the main Theorem from ~\cite{CD}.

We take the space to reiterate that the proof of Theorem \ref{th:Large-Inscribed-Ball} uses estimates from \cite{GS} (see (\ref{Maz})), and a certain isocapacitary estimate (see (\ref{II})) that work only in dimensions $n \geq 3$. As far as dimension $n = 2$ is concerned, it is known due to Mangoubi (Theorem 1.2 of \cite{Man1}, see also \cite{H}) that any nodal domain has wavelength inradius; see further discussion on this at the beginning of Section \ref{LBMP}. 

As a corollary of Theorem \ref{th:Large-Inscribed-Ball}, we derive the following:
\begin{corollary} \label{cor:Large-Ball}
	Let $M$ be a closed manifold of dimension $n \geq 3$, and $\Omega_\lambda \subseteq M $ be a nodal domain upon which the corresponding eigenfunction $ \varphi_\lambda $ is positive. Let $ x_0 $ be a point of maximum of $ \varphi_\lambda $ on $ \Omega_\lambda $. Then there exists a ball $ B\left(x_0, \frac{C}{\lambda^{\alpha(n)}}\right) \subseteq \Omega_\lambda $ with $ \alpha (n) = \frac{1}{4}(n-1)+\frac{1}{2n} $ and a constant $ C = C(M, g) $.
\end{corollary}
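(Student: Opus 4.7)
The plan is to use Theorem \ref{th:Large-Inscribed-Ball} to concentrate the analysis in a wavelength-scale ball $B_0 := B(x_0, r_0/\sqrt{\lambda})$ at the extremal point $x_0$, and then to extract from inside $B_0$ a genuinely inscribed Euclidean ball of radius $\sim 1/\lambda^{\alpha(n)}$ by invoking the iterated Harnack/Faber--Krahn machinery underlying Mangoubi's inradius estimate \cite{Man2}. Concretely, I would fix $\epsilon_0$ sufficiently small in Theorem \ref{th:Large-Inscribed-Ball} so that $|B_0 \cap \Omega_\lambda|/|B_0| \geq 1 - \epsilon_0$ overwhelms the Faber--Krahn threshold on a unit Euclidean ball. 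After rescaling by $\sqrt{\lambda}$, this reduces to a local problem on a ball of fixed radius $r_0$: $\varphi_\lambda$ attains its maximum at the origin, satisfies an essentially Euclidean eigenvalue equation with unit spectral parameter, and is positive on a definite mass fraction of the ball.

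I would then bound $d(x_0, N_{\varphi_\lambda})$ in the rescaled picture from below by $c\lambda^{1/2 - \alpha(n)}$, by iteratively combining the Harnack inequality (which propagates the lower bound $\varphi_\lambda(x_0) = M$ to a definite fraction of $M$ throughout a neighborhood of $x_0$ inside $\Omega_\lambda$), a Donnelly--Fefferman doubling-type bound for $\varphi_\lambda$ (controlling the number of Bers-type vanishing oscillations across a wavelength ball), and the Faber--Krahn inequality applied to the small Euclidean subballs carved out by the nodal set. This is essentially Mangoubi's scheme, but anchored at $x_0$ by virtue of the volumetric concentration supplied by Theorem \ref{th:Large-Inscribed-Ball}. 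Undoing the rescaling yields $B(x_0, c/\lambda^{\alpha(n)}) \subseteq \Omega_\lambda$, which is the claim.

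The main obstacle is producing the precise exponent $\alpha(n) = \tfrac{1}{4}(n-1) + \tfrac{1}{2n}$: a naive approach based on interior gradient estimates for $\varphi_\lambda$ (using $|\nabla \varphi_\lambda| \lesssim \sqrt{\lambda}\,\|\varphi_\lambda\|_{\infty}$) only yields the weaker exponent $1/2$, and reaching $\alpha(n)$ requires the full force of the Donnelly--Fefferman doubling estimates combined with Mangoubi's iterated Harnack chain. The essential new input over Mangoubi's original statement is Theorem \ref{th:Large-Inscribed-Ball}, which allows the inscribed ball to be placed at the prescribed extremal point $x_0$ rather than at some a priori unspecified interior point of $\Omega_\lambda$.
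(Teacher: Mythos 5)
Your proposal heads for a genuinely different — and substantially heavier — route than what the paper actually does, and as written it is not a complete proof: you yourself flag that extracting the precise exponent $\alpha(n)$ from a re-run of Mangoubi's Harnack/doubling/Faber--Krahn iteration is left unresolved, and that is precisely the nontrivial content of Mangoubi's paper. Re-engineering that machinery anchored at $x_0$ would mean importing Donnelly--Fefferman doubling and the full iteration scheme of \cite{Man2} into this paper, which is several orders of magnitude more work than the corollary warrants. There is also a circularity risk: the Harnack step "propagates the lower bound near $x_0$ inside $\Omega_\lambda$" only if you already know a ball at $x_0$ lies in $\Omega_\lambda$, which is the conclusion you want.

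The missing idea is that one should quote a \emph{different} result from \cite{Man2}: not the inradius theorem itself, but the local asymmetry estimate (Theorem \ref{th:Asymmetry} in the paper), which says that any geodesic ball whose half-radius concentric ball meets the nodal set carries a relative volume $\gtrsim \lambda^{-(n-1)/2}$ of negativity. With that as a black box, the proof is two lines. Set $r = r_0/\sqrt\lambda$ equal to $\operatorname{dist}(x_0, N_{\varphi_\lambda})$ and look at $B(x_0,2r)$: its half-ball touches the nodal set, so Theorem \ref{th:Asymmetry} lower-bounds the relative negativity mass by $C\lambda^{-(n-1)/2}$. On the other hand, the volume-ratio estimate \eqref{eq:Volume-Ratio} (derived in the proof of Theorem \ref{th:Large-Inscribed-Ball}, with the time scale $t_0 \sim r_0^2$ fixed relative to the radius) upper-bounds the same quantity, raised to the power $\frac{n-2}{n}$, by $1-e^{-cr_0^2}\approx cr_0^2$. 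Comparing the two and solving for $r_0$ yields $r_0 \gtrsim \lambda^{-(n-1)(n-2)/(4n)}$, i.e.\ $r \gtrsim \lambda^{-\alpha(n)}$ with $\alpha(n) = \tfrac12 + \tfrac{(n-1)(n-2)}{4n} = \tfrac{n-1}{4}+\tfrac{1}{2n}$, exactly as claimed. So your instinct to anchor at $x_0$ via the heat-equation bound is correct, but the "essential new input" is not a re-derivation of Mangoubi's inradius bound; it is the combination of the heat-equation volume estimate at the max point with Mangoubi's asymmetry estimate, applied to a ball of radius twice the inradius at $x_0$.
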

This recovers Theorem 1.5 of ~\cite{Man2}, with the additional information that the ball of radius $\frac{C}{\lambda^{\alpha(n)}}$ is centered around the max point of the eigenfunction $\varphi_\lambda$ (for more discussion on this, see Section \ref{LBMP}). We also point out that using Theorem \ref{th:Large-Inscribed-Ball}, the first author has established in \cite{G} using results from \cite{JM}, the following inner radius bounds for real analytic manifolds:
\begin{theorem}[\cite{G}]
	Let $(M, g)$ be a real-analytic closed manifold of dimension at least $3$. Let $\varphi_\lambda$ be a Laplacian eigenfunction and $\Omega_\lambda$ be a nodal domain of $\varphi_\lambda$. Then, there exist constants $c_1, c_2$ depending only on $(M, g)$ such that 
	$$ \frac{c_1}{\lambda} \leq \text{inrad} (\Omega_\lambda) \leq \frac{c_2}{\sqrt{\lambda}}.$$
	Moreover, if $\varphi_\lambda$ is positive (resp. negative) on $\Omega_\lambda$, then a ball of this radius can be inscribed within a wavelength distance to a point where $\varphi_\lambda$ achieves its maximum (resp. minimum) on $\Omega_\lambda$.
	
\end{theorem} 

For another improvement of inner radius estimates in the smooth setting under certain conditional bounds on $\| \varphi_\lambda\|_{L^\infty(\Omega_\lambda)}$, see Theorem 1.7 of \cite{GM}.

A few assorted remarks: as advertised, in Section \ref{Perf} we address the problem of inscribing a nodal domain $ \Omega_\lambda $ in a tubular neighbourhood around $ \Sigma $. 
In this context, an interesting subcase one might also consider is $ \Sigma $ having conical singularities: at its singular points $ \Sigma $ looks locally like $ \mathbb{R}^{n-1-k} \times \partial C^k $ for some $ k=1,\dots, n-1 $, where $ \partial C^k $ denotes the boundary of a generalized cone, i.e. the cone generated by some open set $ D \subseteq \mathbb{S}^{n-1}$.

In this situation a useful tool is an explicit heat kernel formula for generalized cones $ C \subseteq \mathbb{R}^n $. One denotes the associated Dirichlet eigenfunctions and eigenvalues of the generating set $ D $ by $m_j, l_j $ respectively. Using polar coordinates $ x = \rho \theta, y = r \eta $, one has that the heat kernel of $ P_C(t,x,y) $ of the generalized cone $ C $ is given by
\begin{equation} \label{eq:Heat-Kernel-Cones}
P_C(t,x,y)= \frac{e^{-\frac{\rho^2+r^2}{2t}}}{t(\rho r)^{\frac{n}{2}+1}} \sum_{j=1}^{\infty} I_{\sqrt{l_j+(\frac{n}{2}-1)^2}} (\frac{\rho r}{t}) m_j(\theta) m_j(\eta),
\end{equation}
where $ I_\nu(z) $ denotes the modified Bessel function of order $ \nu $.
For more on the formula (\ref{eq:Heat-Kernel-Cones}) we refer to \cite{BS}. An even more general formula can be found in ~\cite{C}.

The expression for $ P_C(t,x,y) $ provides means for estimating $ p_t(x) $ from below as in Section \ref{Perf}. However, some features of the conical singularity (i.e. the eigenvalues and eigenfunctions $ l_j, m_j $ of the generating set $ D $) enter explicitly in the estimate. Such considerations appear promising in discussing theorems of the following type, for example, and their higher dimensional analogues (see also ~\cite{St}):
\begin{theorem}[Bers, Cheng] Let $n = 2$. If $-\Delta u = \lambda u$, then any nodal set satisfies an interior cone condition with opening angle $\alpha \gtrsim \lambda^{-1/2}$.
\end{theorem}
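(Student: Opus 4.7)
The plan is to run the contradiction scheme of Theorem \ref{stein}, replacing the union $\Sigma_\lambda$ by the boundary of a narrow sector emanating from a point on the nodal set. Suppose for contradiction that a nodal domain $\Omega_\lambda$ fails the cone condition at some boundary point $x_0 \in \partial \Omega_\lambda$: in a geodesic normal chart at $x_0$, the portion of $\Omega_\lambda$ within $B(x_0, r)$ fits inside a Euclidean sector $S_\alpha$ of opening angle $\alpha = \epsilon/\sqrt{\lambda}$, with $\epsilon$ arbitrarily small and $r \sim 1/\sqrt{\lambda}$.

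For the Feynman-Kac upper bound, pick a starting point $p \in \Omega_\lambda \cap B(x_0, r)$ at which $\varphi_\lambda(p) \geq \beta \|\varphi_\lambda\|_{L^\infty(\Omega_\lambda)}$ for some definite $\beta = \beta(\epsilon) > 0$; granting this, the remark following the proof of Theorem \ref{stein} gives
\[ \psi_{M \setminus \Omega_\lambda}(t_0/\lambda, p) \leq 1 - \beta e^{-t_0}. \]
For the matching lower bound, Theorem \ref{compare} reduces the question to Euclidean Brownian motion in the model sector, where the heat kernel formula (\ref{eq:Heat-Kernel-Cones}) with generating set $D \subset \mathbb{S}^1$ an arc of length $\alpha$ yields first Dirichlet eigenvalue $\lambda_1(S_\alpha \cap B_r) \asymp (\pi/\alpha)^2/r^2 \asymp \lambda^2/\epsilon^2$. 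The Brownian survival probability up to time $t_0/\lambda$ is therefore dominated by $\exp(-c t_0 \lambda/\epsilon^2)$, which is arbitrarily small for large $\lambda$. Combined with monotonicity of hitting probabilities, this forces $\psi_{M \setminus \Omega_\lambda}(t_0/\lambda, p) \to 1$, contradicting the upper bound whenever $\lambda$ is large enough in terms of $\epsilon$, $\beta$, $t_0$.

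The main obstacle is the existence of the starting point $p$ with $\varphi_\lambda(p)$ comparable to $\|\varphi_\lambda\|_{L^\infty(\Omega_\lambda)}$. A priori the global max of $\varphi_\lambda$ on $\Omega_\lambda$ may lie far from $x_0$. One resolution is to localize the argument: replace $\Omega_\lambda$ by $\Omega_\lambda \cap B(x_0, r)$ in the Feynman-Kac estimate, using $\varphi_\lambda \geq 0$ on the whole nodal domain to retain the sign structure, and compare with the local max of $\varphi_\lambda$ on $B(x_0, r)$ using a Harnack inequality on shells. A cleaner alternative, in the classical spirit of Bers and Cheng, is to observe that on the wavelength scale $\varphi_\lambda$ is approximately a harmonic polynomial of some degree $N$ vanishing at $x_0$, choose $p$ on the axis of the putative narrow sector at distance $\sim 1/\sqrt{\lambda}$ from $x_0$, and use standard gradient estimates to bound $\varphi_\lambda(p)$ from below. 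The higher-dimensional analogue follows identically, with $D$ replaced by a spherical cap of angular radius $\alpha$ in $\mathbb{S}^{n-1}$, whose first Dirichlet eigenvalue still scales as $\alpha^{-2}$, yielding the analogous bound $\alpha \gtrsim 1/\sqrt{\lambda}$ on the opening solid angle in all dimensions.
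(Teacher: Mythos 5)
The paper does not actually prove this theorem: it cites it as a classical result of Bers and Cheng, remarks that the generalized-cone heat kernel (\ref{eq:Heat-Kernel-Cones}) ``appears promising'' for discussing such statements, and then explicitly writes ``We leave the detailed discussion to a further note.'' So there is no in-paper proof against which to compare, and your proposal is an attempt to carry out the sketch the authors only gesture at. On its own terms, the attempt has two real gaps.

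First, the negation of the interior cone condition is misstated. Failing a cone condition of opening angle $\alpha$ at $x_0$ means that no cone with aperture $\alpha$ and vertex $x_0$ sits inside $\Omega_\lambda$ in a fixed small ball; it does \emph{not} say that $\Omega_\lambda \cap B(x_0,r)$ is contained in a sector of aperture $\alpha$. In two dimensions one can upgrade the former to the latter only after invoking the Bers structure theorem, which asserts that near a zero of vanishing order $N$ the eigenfunction is asymptotic to a homogeneous harmonic polynomial of degree $N$, so that nodal domains near $x_0$ are asymptotically sectors of opening $\pi/N$. That identification has to be stated and used explicitly (and care taken that it is not circular: Bers' representation theorem by itself does not give the equiangularity or the $\lambda^{-1/2}$ angle bound, which is part of what is being proved here, so this use is legitimate but must be separated out). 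As written, you pass directly from ``cone condition fails'' to ``contained in a narrow sector,'' which is not a valid inference.

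Second, and more seriously, the existence of the starting point $p$ you acknowledge as ``the main obstacle'' is left genuinely unresolved, and both of your proposed fixes break down. Harnack on shells only compares values of $\varphi_\lambda$ across regions of comparable width; if $\Omega_\lambda$ is a long thin flange joining $x_0$ to a distant bulk where the global maximum is attained, iterated Harnack gives a lower bound on $\varphi_\lambda(p)/\|\varphi_\lambda\|_{L^\infty(\Omega_\lambda)}$ that decays exponentially in the number of shells, i.e.\ a $\beta$ that degenerates with $\lambda$, which destroys the Feynman--Kac upper bound $1-\beta e^{-t_0}$. The ``Bers plus gradient estimate'' alternative would bound $\varphi_\lambda(p)$ relative to the \emph{local} supremum in $B(x_0,r)$, not relative to $\|\varphi_\lambda\|_{L^\infty(\Omega_\lambda)}$, which is the quantity that actually enters (\ref{stein1}); closing that gap requires an argument that the local sup on the wavelength ball is comparable to the global sup on the nodal domain, and no such argument is given. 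Finally, a smaller but nonzero issue: the exit-time bound $\exp(-c\lambda_1 t)$ bounds the probability of staying in the truncated sector $S_\alpha\cap B_r$, but leaving that region through the circular arc $\partial B_r$ does not mean you have hit $M\setminus\Omega_\lambda$; one needs a separate estimate showing that for a thin sector the exit is overwhelmingly through the straight sides (which do lie in $M\setminus\Omega_\lambda$), or a slightly different comparison domain. These are standard facts but need to be said.
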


\subsection{Basic heuristics}
We outline the main idea behind Theorems \ref{babycase}, \ref{stein} and \ref{th:Large-Inscribed-Ball}.

First, one considers a point $ x_0 \in \Omega_\lambda$ where the eigenfunction achieves a maximum on the nodal domain (w.l.o.g. we assume that the eigenfunction is positive on $ \Omega_\lambda $). One then considers the quantity $ p(t,x_0) $ - i.e. the probability that a Brownian motion started at $ x_0 $ escapes the nodal domain within time $ t $.

The main strategy is to obtain two-sided bounds for $ p(t, x_0) $.

On one hand, we have the Feynman-Kac formula (see Subsection \ref{F-K}) which provides a straightforward upper bound only in terms of $ t $ (see Equation (\ref{stein1}) below).

On the other hand, depending on the context of the theorems above, we provide a lower bound for $ p(t,x_0) $ in terms of some geometric data. To this end, we take advantage of various tools some of which are: formulas for hitting probabilities of spheres and the parabolic scaling between the space and time variables; comparability of Brownian motions on manifolds with similar geometry (see Subsection \ref{Eu-comp}); bounds for hitting probabilities in terms of $ 2 $-capacity (cf. \cite{GS}), etc.

\subsection{Outline of the paper}
In Section \ref{Int}, we recall tools from $n$-dimensional Brownian motion and the Feynman-Kac formulation of heat diffusion, and discuss the parabolic scaling technique we referred to above. We include some background material on stochastic analysis on Riemannian manifolds, some of which (to our knowledge) is not widely known, but is important to our investigation. We also believe such results to be of independent interest to the community. Of particular mention is Theorem \ref{compare}, which roughly says that if the metric is perturbed slightly, hitting probabilities of compact sets by Brownian particles are also perturbed slightly. This allows us to apply Brownian motion formulae from $\RR^n$ to compact manifolds, on small distance and time scales. 

In Section \ref{Perf}, we begin by proving Theorem \ref{babycase}. 
As mentioned before, we then take the generalization 
 one step further, by considering intersecting surfaces of different dimensions. Our main result in this direction is Theorem \ref{stein}, which gives a quantitative lower bound on how ``thin'' or ``narrow'' a nodal domain can be. 

In Section \ref{LBMP}, we take up the investigation of inradius estimates of $\Omega_\lambda$. As mentioned before, our main result in this direction is Theorem \ref{th:Large-Inscribed-Ball}. We also establish Corollary \ref{cor:Large-Ball}.




\subsection{Acknowledgements} It is a pleasure to thank Stefan Steinerberger for his detailed comments on a draft version of this paper, as well as Dan Mangoubi for his comments and remarks. The authors further thank Yuval Peres and Itai Benjamini for advice regarding Martin capacity, as well as Steve Zelditch for discussions on the Feynman-Kac formula and Brownian motion. Thanks are also due to the anonymous referee for a substantial improvement in the final presentation. Lastly, the authors would also like to thank Werner Ballmann, and gratefully acknowledge the Max Planck Institute for Mathematics, Bonn for providing ideal working conditions.

\section{Preliminaries: heat equation, Feynman-Kac and Bessel processes}\label{Int}


\subsection{Feynman-Kac formula}\label{F-K}
We begin by stating a Feynman-Kac formula for open connected domains in compact manifolds for the heat equation with Dirichlet boundary conditions. Such formulas seem to be widely known in the community, but since we were unable to find out an explicit reference, we also indicate a line of proof.
\begin{theorem}\label{F-K-T}
	Let $M$ be a compact Riemannian manifold. For any open connected $\Omega \subset M$, $f \in L^2(\Omega)$, we have that 
	\beq
	e^{t\Delta} f(x) = \mathbb{E}_x(f(\omega(t))\phi_\Omega(\omega, t)), t > 0, x \in \Omega,
	\eeq
	where $\omega (t)$ denotes an element of the probability space of Brownian motions starting at $x$, $\mathbb{E}_x$ is the expectation with regards to the measure on that probability space, and 
	$$
	\phi_\Omega(\omega, t) = 
	\begin{cases}
	1, & \text{if } \omega ([0, t]) \subset \Omega\\
	0, & \text{otherwise. }
	\end{cases}
	$$
\end{theorem}

A proof of Theorem \ref{F-K-T} can be constructed in three steps. First, one proves the corresponding statement when $\Omega = M$. This can be found, for example, in 
~\cite{BP}, Theorem 6.2. One can then combine this with a barrier potential method to prove a corresponding statement for domains $\Omega$ with Lipschitz boundary. Lastly, the extension to domains with no regularity requirements on the boundary is achieved by a standard limiting argument. For details on the last two steps, see ~\cite{T}, Chapter 11, Section 3.

\subsection{Euclidean comparability of hitting probabilities}\label{Eu-comp}
Implicit in many of our calculations is the following heuristic: if the metric is perturbed slightly, hitting probabilities of compact sets by Brownian particles are also perturbed slightly, provided one is looking at small distances $r$ and at small time scales $t = O(r^2)$. 

To describe the set up, let $(M, g)$ be a compact Riemannian manifold and cover $M$ by charts $(U_k, \phi_k)$ such that in these charts $g$ is bi-Lipschitz to the Euclidean metric. Consider an open ball $B(p, r) \subset M$, where $r$ is considered small, and in particular, smaller than the injectivity radius of $M$. Let $B(p, r)$ sit inside a chart $(U, \phi)$ and let $\phi (p) = q$ and $\phi (B(p, r)) = B(q, s) \subset \RR^n$. Let $K$ be a compact set inside $B(p, r)$ and let $K^{'} := \phi(K) \subset B(q, s)$. 

Now, let $\psi_K^M(T, p)$ denote the probability that a Brownian motion on $(M, g)$ started at $p$ and killed at a fixed time $T$ hits $K$ within time $T$. $\psi_{K^{'}}^e(t, q)$ is defined similarly for the standard Brownian motion in $\RR^n$ started at $q$ and killed at the same fixed time $T$. Now, we fix the time $T = cr^2$, where $c$ is a constant. The following is the comparability result:
\begin{theorem}\label{compare}
	There exists constants $c_1, c_2$, depending only on $c$ and $M$ such that 
	\beq\label{comp1}
	c_1 \psi_{K^{'}}^e(T, q) \leq \psi_K^M(T, p) \leq c_2 \psi_{K^{'}}^e(T, q).\eeq
\end{theorem}

The proof uses the concept of Martin capacity (see ~\cite{BPP}, Definition 2.1):
\begin{definition}
	Let $\Lambda$ be a set and $\mathcal{B}$ a $\sigma$-field of subsets of $\Lambda$. Given a measurable function $F : \Lambda \times \Lambda \to [0, \infty]$ and a finite measure $\mu$ on $(\Lambda, \Cal{B})$, the $F$-energy of $\mu$ is 
	\[
	I_F(\mu) = \int_{\Lambda}\int_{\Lambda}F(x, y)d\mu(x)d\mu(y).
	\]
	The capacity of $\Lambda$ in the kernel $F$ is 
	\beq
	\text{Cap}_F(\Lambda) = \left[ \inf_\mu I_F(\mu)\right]^{-1},
	\eeq
	where the infimum is over probability measures $\mu$ on $(\Lambda, \Cal{B})$, and by convention, $\infty^{-1} = 0$. 
\end{definition} 

Now we quote the following general result, which is Theorem 2.2 in ~\cite{BPP}.
\begin{theorem}
	Let $\{X_n\}$ be a transient Markov chain on the countable state space $Y$ with initial state $\rho$ and transition probabilities $p(x, y)$. For any subset $\Lambda$ of $Y$, we have
	\beq
	\frac{1}{2}\text{Cap}_M(\Lambda) \leq \mathbb{P}_\rho[\exists n \geq 0 : X_n \in \Lambda] \leq \text{Cap}_M(\Lambda),\eeq
	where $M$ is the Martin kernel $M(x, y) = \frac{G(x, y)}{G(\rho, y)}$, and $G(x, y)$ denotes the Green's function.
\end{theorem}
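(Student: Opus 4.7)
The plan is to establish the two inequalities by exploiting the classical duality between hitting probabilities and capacity, realized through a first-moment/second-moment argument applied to the occupation times $L_y := \sum_{n \geq 0} \mathbf{1}_{\{X_n = y\}}$. Set $\tau := \inf\{n \geq 0 : X_n \in \Lambda\}$ and $p := \mathbb{P}_\rho[\tau < \infty]$.

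For the upper bound, one exhibits an explicit probability measure realizing the capacity. Let $\sigma(y) := \mathbb{P}_\rho[\tau < \infty, X_\tau = y]$ be the hitting sub-probability measure on $\Lambda$, with total mass $p$. The strong Markov property applied at $\tau$ gives, for every $y \in \Lambda$,
\begin{equation*}
G(\rho, y) \;=\; \sum_{x \in \Lambda} \sigma(x)\, G(x, y),
\end{equation*}
and dividing through by $G(\rho, y)$ yields the balayage identity $\sum_{x \in \Lambda} \sigma(x) M(x, y) = 1$. Setting $\mu := \sigma/p$ and integrating against $\mu(y)$ produces $I_M(\mu) = 1/p$, so $\mathrm{Cap}_M(\Lambda) \geq p$.

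For the lower bound, I would run a Paley--Zygmund second-moment argument on the weighted occupation time
\begin{equation*}
Z_\mu \;:=\; \sum_{y \in \Lambda} \frac{\mu(y)}{G(\rho, y)}\, L_y,
\end{equation*}
for an arbitrary probability measure $\mu$ on $\Lambda$. The first moment is immediate: $\mathbb{E}_\rho[Z_\mu] = 1$. For the second moment, I split the double sum $\mathbb{E}_\rho[L_x L_y] = \sum_{m, n \geq 0} \mathbb{P}_\rho(X_m = x, X_n = y)$ into the three cases $m < n$, $m > n$, $m = n$, apply the Markov property in each, and discard the non-positive correction from the diagonal to obtain $\mathbb{E}_\rho[L_x L_y] \leq G(\rho, x) G(x, y) + G(\rho, y) G(y, x)$. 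This yields $\mathbb{E}_\rho[Z_\mu^2] \leq 2\, I_M(\mu)$, so Paley--Zygmund gives $\mathbb{P}_\rho[Z_\mu > 0] \geq 1/(2\, I_M(\mu))$. Since $\{Z_\mu > 0\} \subseteq \{\tau < \infty\}$, taking the supremum over $\mu$ completes the bound $p \geq \tfrac{1}{2}\mathrm{Cap}_M(\Lambda)$.

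The main subtlety lies in the second-moment computation: because the Martin kernel $M(x, y) = G(x, y)/G(\rho, y)$ is asymmetric in its arguments, the two temporal orderings $m < n$ and $m > n$ contribute terms involving $M(x, y)$ and $M(y, x)$ respectively, producing the factor $2$ in the lower bound and hence the gap between the two estimates. All remaining steps reduce to bookkeeping with the Markov property and the definition of the Green's function, both of which are well-behaved under the standing transience assumption that guarantees $G(\rho, y) < \infty$ and $I_M(\mu) < \infty$ for the optimizing $\mu$.
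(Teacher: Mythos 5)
Your proof is correct, and it is essentially the original argument of Benjamini--Pemantle--Peres: the upper bound via the balayage identity for the harmonic (hitting) measure, and the lower bound via a second-moment (Paley--Zygmund) estimate on the $1/G(\rho,\cdot)$-weighted occupation time, with the asymmetry of the Martin kernel producing the factor $2$. The paper itself does not reprove this statement but quotes it as Theorem 2.2 of \cite{BPP}, so your argument coincides with the cited proof rather than offering a different route.
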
 

For the special case of Brownian motions, this reduces to (see Proposition 1.1 of ~\cite{BPP} and Theorem 8.24 of ~\cite{MP}):
\begin{theorem}
	Let $\{B(t): 0 \leq t \leq T\}$ be a transient Brownian motion in $\RR^n$ starting from the point $\rho$, and $A \subset D$ be closed, where $D$ is a bounded domain. Then,
	\beq
	\frac{1}{2}\text{Cap}_M(A) \leq \mathbb{P}_\rho\{B(t) \in A \text{ for some } 0 < t \leq T\} \leq \text{Cap}_M(A).\eeq
\end{theorem}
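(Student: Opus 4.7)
The goal is to sandwich the hitting probability $\mathbb{P}_\rho[H_A]$ of a closed $A \subset D$ by a transient Brownian motion in $\RR^n$ between $\tfrac{1}{2}\mathrm{Cap}_M(A)$ and $\mathrm{Cap}_M(A)$, where $H_A := \{B(t) \in A \text{ for some } 0 < t \leq T\}$. My plan is to treat the two inequalities separately by the classical capacity/energy method, mirroring in continuous time the Markov-chain argument behind the Theorem stated immediately above; the natural reading of the finite-time hypothesis is that $T$ is large (or $T=\infty$), with any finite-$T$ correction absorbed into the truncated Green function $G_T(\rho, y) = \int_0^T p_t(\rho, y)\, dt$.

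\textbf{Upper bound.} For $\mathbb{P}_\rho[H_A] \leq \mathrm{Cap}_M(A)$, I would argue as follows. Let $\tau_A := \inf\{t > 0 : B(t) \in A\}$, and let $H_\rho^A$ be the hitting distribution, a sub-probability measure on $A$ of total mass $\mathbb{P}_\rho[H_A]$; set $\nu := H_\rho^A / \mathbb{P}_\rho[H_A]$. The strong Markov property applied to $y \mapsto G(\cdot, y)$, combined with transience, gives the balayage inequality
\[
G(\rho, y) \;\geq\; \mathbb{E}_\rho\bigl[G(B(\tau_A), y);\, \tau_A \leq T\bigr] \;=\; \int_A G(x, y)\, dH_\rho^A(x),
\]
valid for every $y \in \RR^n$. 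Dividing by $G(\rho, y)$ and integrating against $\nu(dy)$ yields $\mathbb{P}_\rho[H_A] \cdot I_M(\nu) \leq 1$, hence $\mathbb{P}_\rho[H_A] \leq I_M(\nu)^{-1} \leq \mathrm{Cap}_M(A)$.

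\textbf{Lower bound.} For $\mathbb{P}_\rho[H_A] \geq \tfrac{1}{2}\mathrm{Cap}_M(A)$, I would use the second-moment method (Paley--Zygmund). Fix a probability measure $\mu$ on $A$ with $I_M(\mu) < \infty$. For small $\epsilon > 0$, let $\tau_y^\epsilon := \inf\{t > 0 : B(t) \in \overline{B(y, \epsilon)}\}$ and introduce
\[
Z_\epsilon := \int_A \frac{\mathbf{1}_{\{\tau_y^\epsilon \leq T\}}}{G(\rho, y)}\, d\mu(y).
\]
The hitting-ball asymptotic $\mathbb{P}_\rho[\tau_y^\epsilon \leq T] = c_n\, \epsilon^{n-2} G_T(\rho, y)(1 + o(1))$ (uniformly on compacta bounded from $\rho$) gives $\mathbb{E}[Z_\epsilon] = c_n\, \epsilon^{n-2}(1 + o(1))$. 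A strong-Markov computation at $\tau_x^\epsilon \wedge \tau_y^\epsilon$ yields $\mathbb{E}[Z_\epsilon^2] \leq 2 c_n^2\, \epsilon^{2(n-2)}\, I_M(\mu)(1 + o(1))$. Paley--Zygmund then gives
\[
\mathbb{P}_\rho[Z_\epsilon > 0] \;\geq\; \frac{(\mathbb{E}[Z_\epsilon])^2}{\mathbb{E}[Z_\epsilon^2]} \;\geq\; \frac{1 + o(1)}{2\, I_M(\mu)}.
\]
Since $\{Z_\epsilon > 0\}$ is contained in the event that $B$ enters the $\epsilon$-neighbourhood of $A$ by time $T$, which decreases to $H_A$ by closedness of $A$ and path-continuity, letting $\epsilon \to 0$ and taking $\sup$ over $\mu$ gives $\mathbb{P}_\rho[H_A] \geq \tfrac{1}{2}\mathrm{Cap}_M(A)$.

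\textbf{Main obstacle.} The delicate step is the second-moment bound. To recover the Martin energy $I_M(\mu)$, one splits the double integral defining $\mathbb{E}[Z_\epsilon^2]$ according to which of $\tau_x^\epsilon$, $\tau_y^\epsilon$ occurs first; applies the strong Markov property to reduce the inner hitting probability to $\mathbb{P}_z[\tau_y^\epsilon \leq T]$ for $z$ on the $\epsilon$-sphere around $x$; and invokes the local Newtonian asymptotics $G(z, y) \sim c_n |z - y|^{2-n}$ to recover the ratio $G(x, y)/G(\rho, y)$ in the integrand. Making this step rigorous requires uniformity of the hitting-ball asymptotics down to scale $\epsilon$ and a diagonal-singularity analysis to dominate the remainder by $o(I_M(\mu))$. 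A secondary subtlety is handling the finite horizon: since $A \subset D$ is bounded and $\rho$ lies at positive distance from $A$, the truncated Green function $G_T(\rho, y)$ can replace $G(\rho, y)$ throughout with only lower-order error, provided $T$ is large enough in comparison to $\mathrm{diam}(D)^2$; the constants $\tfrac{1}{2}$ and $1$ are universal because they come from Paley--Zygmund and balayage respectively.
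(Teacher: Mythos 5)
The paper does not prove this statement at all: it is quoted as known, with references to Proposition 1.1 of \cite{BPP} and Theorem 8.24 of \cite{MP}, and is presented as the Brownian-motion specialization of the discrete Markov-chain Martin capacity theorem (Theorem 2.2 of \cite{BPP}) stated just before it. Your proposal instead gives a direct continuous-time proof --- balayage/first moment for the upper bound, small-ball second moment plus Paley--Zygmund for the lower bound --- which is in substance how the cited references themselves obtain the result (the constant $\tfrac12$ coming from Paley--Zygmund, the constant $1$ from the hitting-measure argument). So your route is sound and more self-contained than the paper's citation, at the cost of the technical work you correctly flag (uniform small-ball hitting asymptotics and the near-diagonal analysis), all of which is standard: for a fixed finite-energy $\mu$ the near-diagonal region $\{|x-y|<2\epsilon\}$ contributes $o(\epsilon^{2(n-2)})$ by dominated convergence, so the claimed bound $\mathbb{E}[Z_\epsilon^2]\le 2c_n^2\epsilon^{2(n-2)}(I_M(\mu)+o(1))$ does hold.

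The one substantive caveat is your treatment of the horizon $T$, which does not match how the theorem is used in the paper. You read the statement with the full Green's function $G$ and propose to absorb the finite horizon by assuming $T$ large compared with $\mathrm{diam}(D)^2$; but the paper applies the result precisely with $T=cr^2$ small, and with the Martin kernel built from the cut-off Green's function $G_T(x,y)=\int_0^T h(t,x,y)\,dt$. With the full-$G$ kernel the lower bound is simply false for small $T$ (the capacity is independent of $T$ while the hitting probability tends to $0$), so the ``large $T$'' reading would not serve the application. The fix is not a largeness assumption but to run both moment arguments consistently with $G_T$: the first moment uses $\mathbb{P}_\rho[\tau^\epsilon_y\le T]\sim c_n\epsilon^{n-2}G_T(\rho,y)$, and in the second moment the strong Markov step produces $G_{T-s}(z,y)$, which is $\le G_T(z,y)$ and hence harmless. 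Be aware, though, that the same loss of time-homogeneity affects your upper bound: balayage only gives $G_T(\rho,y)\ge \mathbb{E}_\rho\bigl[G_{T-\tau_A}(B(\tau_A),y);\,\tau_A\le T\bigr]$, not the inequality with $G_T$ inside the expectation, so for the $G_T$-kernel the constant $1$ requires an extra argument (or one settles for constants depending on $c=T/r^2$, which is all that the paper's Theorem \ref{compare} actually needs).
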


An inspection of the proofs reveals that they go through with basically no changes on a compact Riemannian manifold $M$, when the Brownian motion is killed at a fixed time $T = cr^2$, and the Martin kernel $M(x, y)$ is defined as $\frac{G(x, y)}{G(\rho, y)}$, with $G(x, y)$ being the ``cut-off'' Green's function defined as follows: if $h_M(t, x, y)$ is the heat kernel of $M$,
\[
G(x, y) := \int_0^T h_M(t, x, y)dt.
\]

Now, to state it formally, in our setting, we have
\begin{theorem}
	\beq
	\frac{1}{2}\text{Cap}_M(K) \leq \psi_K^M(T, p) \leq \text{Cap}_M(K).
	\eeq
\end{theorem}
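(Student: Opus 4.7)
The plan is to port the Benjamini--Pemantle--Peres (BPP) Martin-capacity argument, stated just above in its Euclidean form, to Brownian motion on $(M,g)$ killed at the deterministic time $T = c r^2$. The killed process is automatically transient on $[0,T]$: its Green's function is exactly the cut-off expression $G(x,y) = \int_0^T h_M(t,x,y)\,dt$, which is finite off the diagonal thanks to the standard two-sided Gaussian bounds on $h_M$ available on a compact manifold. With the Martin kernel $M(x,y) = G(x,y)/G(\rho,y)$ and the associated energy $I_M(\mu) = \iint M(x,y)\,d\mu(x)\,d\mu(y)$ defined verbatim as in the countable-state theorem, the BPP proof only uses (i) the strong Markov property and (ii) the finiteness of $G$. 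Both hold for the killed Brownian motion on $M$, so the argument should be transcribable with no substantive change.

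For each probability measure $\mu$ supported on $K$, the key object is the additive functional
\[
Z_\mu^\epsilon \;:=\; \int_0^T \int_K \frac{\mathbf{1}_{B_\epsilon(y)}(\omega(s))}{G(\rho,y)\,\Vol(B_\epsilon(y))}\,d\mu(y)\,ds,
\]
which smooths out the Dirac-type occupation density near the diagonal. A direct Fubini calculation gives $\mathbb{E}_\rho[Z_\mu^\epsilon] \to 1$ as $\epsilon \to 0$, while applying the strong Markov property at the first hitting time $\tau_K$ of $K$ yields $\mathbb{E}_\rho[Z_\mu^\epsilon \mid \tau_K \leq T] \to I_M(\mu)$. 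Markov's inequality then delivers $\psi_K^M(T,p) \leq 1/I_M(\mu)$, and optimizing over $\mu$ produces the upper bound $\psi_K^M(T,p) \leq \text{Cap}_M(K)$. For the lower bound I would apply the Paley--Zygmund inequality $\mathbb{P}_\rho(Z_\mu^\epsilon > 0) \geq (\mathbb{E}_\rho Z_\mu^\epsilon)^2/\mathbb{E}_\rho(Z_\mu^\epsilon)^2$; a strong-Markov second-moment calculation, exactly as in BPP, produces $\mathbb{E}_\rho(Z_\mu^\epsilon)^2 \leq 2 I_M(\mu) + o(1)$ as $\epsilon \to 0$. This gives $\psi_K^M(T,p) \geq 1/(2 I_M(\mu))$; optimizing once more produces the factor $\tfrac{1}{2}$.

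The only genuine obstacle is technical: sending $\epsilon \to 0$ requires control on the $|x-y|^{2-n}$-type singularity of $G$ (for $n\geq 3$), which is supplied by the short-time Gaussian bounds on $h_M$ comparing it to its Euclidean counterpart up to constants depending only on $(M,g)$. Once these bounds are in hand, the $o(1)$ terms in both moment computations vanish uniformly in $\mu$, and the proof reduces to the same bookkeeping as in BPP. No genuinely new geometric input beyond the existence of those Gaussian bounds, together with completeness of the killed process (automatic on a compact manifold), is required.
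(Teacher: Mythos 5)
Your approach is identical in spirit to the paper's: invoke the Benjamini--Pemantle--Peres Martin-capacity bound (cited in the appendix as Theorem 2.2 and Proposition 1.1 of \cite{BPP}, Theorem 8.24 of \cite{MP}) and port it to Brownian motion on $M$ killed at the deterministic time $T=cr^2$, with the Martin kernel built from the cut-off Green's function $G(x,y)=\int_0^T h_M(t,x,y)\,dt$. You spell out the moment-method machinery (the $\epsilon$-regularized occupation functional, Fubini, strong Markov, Paley--Zygmund) that the paper compresses into the phrase ``an inspection of the proofs reveals that they go through with basically no changes,'' and your identification of the key input --- two-sided Gaussian heat-kernel bounds to tame the near-diagonal singularity of $G$ --- matches the paper's.

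One logical wobble in your sketch of the upper bound: you cannot show $\psi_K^M(T,p)\le 1/I_M(\mu)$ for arbitrary $\mu$ and then ``optimize,'' since $\sup_\mu I_M(\mu)=\infty$ and that route yields nothing. The correct BPP argument takes $\mu$ equal to the hitting distribution $\nu$ of $B(\tau_K)$ on the event $\{\tau_K\le T\}$, proves $I_M(\nu)\le \psi_K^M(T,p)^{-1}$, and then uses $I_M(\nu)\ge\inf_\mu I_M(\mu)=\mathrm{Cap}_M(K)^{-1}$. A related subtlety that both you and the paper pass over: after the strong Markov step the post-hitting occupation is governed by $G_{T-\tau_K}$, not $G_T$, and $G_{T-\tau_K}\le G_T$ pushes the inequality the unhelpful way for the upper bound. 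This is a genuine departure from the transient infinite-lifetime setting of \cite{BPP} and \cite{MP} and merits a sentence of justification (for instance, via a comparison of $G_{2T}$ with $G_T$, or by killing at an independent exponential time and using memorylessness), even if it ultimately changes only an absolute constant.
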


Now, let $h_{\RR^n}(t, x, y)$ denote the heat kernel on $\RR^n$. To prove Theorem \ref{compare}, it suffices to show that for $y \in K$, and $y^{'} = \phi(y) \in K^{'}$, we have constants $C_1, C_2$ (depending on $c$ and $M$) such that 
\beq\label{comp}
C_1\int_0^T h_{\RR^n}(t, q, y^{'})dt \leq \int_0^T h_M(t, p, y)dt \leq C_2 \int_0^T h_{\RR^n}(t, q, y^{'})dt.\eeq

In other words, we need to demonstrate comparability of Green's functions ``cut off'' at time $T = cr^2$. Recall that we have the following Gaussian two-sided heat kernel bounds on a compact manifold (see, for example, Theorem 5.3.4 of \cite{Hs} for the lower bound and Theorem 4 of \cite{CLY} for the upper bound, also (4.27) of \cite{GS}): for all $(t, p, y) \in (0, 1) \times M \times M$, and positive constants $c_1, c_2, c_3, c_4$ depending only on the geometry of $M$, 
$$ \frac{c_3}{t^{n/2}}e^{\frac{-c_1d(p,y)^2}{4t}} \leq h_M(t, p, y) \leq \frac{c_4}{t^{n/2}}e^{\frac{-c_2d(p,y)^2}{4t}},$$ 
where $d$ denotes the distance function on $M$. Then, using the comparability of the distance function on $M$ with the Euclidean distance function (which comes via metric comparability in local charts),
for establishing (\ref{comp}), it suffices to observe that for any positive constant $c_5$, we have that 
\[
\int_0^{cr^2}t^{-\frac{n}{2}}e^{-\frac{c_5r^2}{4t}}\;dt = \frac{2^{n - 2}}{c_5^{\frac{n}{2} - 1}}\frac{1}{r^{n-2}}\Gamma\Big(\frac{n}{2}-1,\frac{c_5}{4c}\Big),\] where $\Gamma(s,x)$ is the (upper) incomplete Gamma function. Since $r$ is a small constant chosen independently of $\lambda$, we observe that $C_1, C_2$ are constants in (\ref{comp}) depending only on $c, c_1, c_2, c_3, c_4, c_5, r$ and $M$, which finally proves (\ref{comp1}).

\begin{remark}
	Theorem \ref{compare} is implicit in \cite{St}, but it was not precisely stated or proved there. Since we are unable to find an explicit reference, here we have given a formal statement 
	and indicated a proof. We believe that the statement of Theorem \ref{compare} will also be of independent interest for people interested in stochastic analysis on manifolds.
\end{remark}

\subsection{Brownian motion on a manifold and Euclidean Bessel processes}
Using the probabilistic formulation of the heat equation for the study of nodal geometry, we are largely inspired by the methods in ~\cite{St}. Of course, such ideas have appeared in the literature before; for example, they are implicit in ~\cite{GJ}. Here we extend some ideas of Steinerberger with the help of tools from $n$-dimensional Brownian motion.

Given an open subset $V \subset M$, consider the solution $p_t(x)$ to the following diffusion process:
\begin{align*}
(\pa_t - \Delta)p_t(x) & = 0, \text{    } x \in V\\
p_t(x) & = 1, \text{    } x \in \pa V\\
p_0(x) & = 0, \text{    } x \in V.
\end{align*}

By the Feynman-Kac formula (see Subsection \ref{F-K}), this diffusion process can be understood as the probability that a Brownian motion particle started in $x$ will hit the boundary within time $t$. Now, fix an eigenfunction $\varphi$ (corresponding to the eigenvalue $\lambda$) and a nodal domain $\Omega$, so that $\varphi > 0$ on $\Omega$ without loss of generality. Calling $\Delta$ the Dirichlet Laplacian on $\Omega$ and setting $\Phi (t, x) := e^{t\Delta}\varphi (x)$, we see that $\Phi$ solves 
\begin{align}\label{HF}
(\pa_t - \Delta)\Phi (t, x) & = 0, x \in \Omega\nonumber\\
\Phi (t, x) & = 0, \text{  on   } \{\varphi = 0\}\\
\Phi (0, x) & = \varphi (x), \text{    } x \in \Omega.\nonumber
\end{align}

Using the Feynman-Kac formula given by Theorem \ref{F-K-T}, we have,
\beq
e^{t\Delta}f(x) = \mathbb{E}_x(f(\omega(t))\phi_{\Omega}(\omega, t)), t > 0, 
\eeq
where $\omega (t)$ denotes an element of the probability space of Brownian motions starting at $x$, $\mathbb{E}_x$ is the expectation with regards to the measure on that probability space, and 
$$
\phi_\Omega(\omega, t) = 
\begin{cases}
1, & \text{if } \omega ([0, t]) \subset \Omega\\
0, & \text{otherwise. }
\end{cases}
$$

Now, consider a nodal domain $\Omega$ corresponding to the eigenfunction $\varphi$, and consider the heat flow (\ref{HF}). Let $x_0 \in \Omega$ such that $\varphi (x_0) = \V \varphi\V_{L^\infty(\Omega)}$. We use the following upper bound derived in ~\cite{St}:
\begin{align}\label{stein1}
\Phi (t, x) & = e^{-\lambda t}\varphi (x) = \mathbb{E}_x(\varphi(\omega(t))\phi_{\Omega}(\omega, t))\\
& \leq \V \varphi\V_{L^\infty(\Omega)}\mathbb{E}_x(\phi_{\Omega}(\omega, t)) = \V \varphi\V_{L^\infty(\Omega)}(1 - p_t(x))\nonumber.
\end{align}
Setting $t = \lambda^{-1}$ and $x = x_0$, we see that the probability of the Brownian motion starting at an extremal point $x_0$ leaving $\Omega$ within time $\lambda^{-1}$ is $\leq 1 - e^{-1}$. A rough interpretation is that maximal points $ x $ are situated deeply into the nodal domain. Using the notation introduced in the Introduction, the last derived upper estimate translates to $\psi_{M \setminus \Omega}(\lambda^{-1}, x) \leq 1 - e^{-1}$.


Now, we consider an $m$-dimensional Brownian motion of a particle starting at the origin in $\RR^m$, 
and calculate the probability of the particle hitting a sphere $\{x \in \RR^m : \V x\V \leq r\}$ of radius $r$ within time $t$. By a well known formula first derived in ~\cite{Ke}, we see that such a probability is given as follows:
\beq\label{JKF}
\mathbb{P}(\sup_{0 \leq s \leq t}\V B(s)\V \geq r) = 1 - \frac{1}{2^{\nu - 1}\Gamma (\nu + 1)}\sum^\infty_{k = 1}\frac{j_{\nu, k}^{\nu - 1}}{J_{\nu + 1}(j_{\nu, k})}e^{-\frac{j^2_{\nu, k}\text{  t}}{2r^2}}, \quad \nu > -1,
\eeq
where $\nu = \frac{m - 2}{2}$ is the ``order'' of the Bessel process, $J_\nu$ is the Bessel function of the first kind of order $\nu$, and $0 < j_{\nu, 1} < j_{\nu, 2} < .....$ is the sequence of positive zeros of $J_\nu$.

Choose $x = x_0, t = \lambda^{-1}$, as before, and let $r = C^{1/2}\lambda^{-1/2}$, where $C$ is a constant to be chosen later, independently of $\lambda$. Plugging this in (\ref{JKF}) then reads,

\beq\label{JKF1}
\mathbb{P}(\sup_{0 \leq s \leq \lambda^{-1}}\V B(s)\V \geq C\lambda^{-1/2}) = 1 - \frac{1}{2^{\nu - 1}\Gamma (\nu + 1)}\sum^\infty_{k = 1}\frac{j_{\nu, k}^{\nu - 1}}{J_{\nu + 1}(j_{\nu, k})}e^{-\frac{j^2_{\nu, k}}{2C}}, \quad \nu > -1.
\eeq 

We need to make a few comments about the asymptotic behaviour of $j_{\nu, k}$ here. For notational convenience, we write $\alpha_k \sim \beta_k$, as $k \to \infty$ if we have $\alpha_k/\beta_k \to 1$ as $k \to \infty$. ~\cite{Wa}, pp 506, gives the asymptotic expansion 
\beq\label{asy1}
j_{\nu, k} = (k + \nu/2 + 1/4)\pi + o(1) \text{   as  } k \to \infty,
\eeq
which tells us that $j_{\nu, k} \sim k\pi$. Also, from ~\cite{Wa}, pp 505, we have that 
\beq\label{asy2}
J_{\nu + 1}(j_{\nu, k}) \sim (-1)^{k - 1}\frac{\sqrt{2}}{\pi}\frac{1}{\sqrt{k}}.
\eeq

These asymptotic estimates, in conjunction with (\ref{JKF1}), tell us that keeping $\nu$ bounded, and given a small $\eta > 0$, one can choose the constant $C$ small enough (depending on $\eta$) such that 
\beq\label{fund}
\mathbb{P}(\sup_{0 \leq s \leq \lambda^{-1}}\V B(s)\V \geq C\lambda^{-1/2}) > 1 - \eta.\eeq

This estimate plays a role in Section \ref{Perf}. In this context, see also Proposition 5.1.4 of ~\cite{Hs}. 
\section{Admissibility conditions and intersecting surfaces}\label{Perf}



\begin{proof}[Proof of Theorem \ref{babycase}]
If $\varphi_\lambda$ attains its maximum within $\Omega_\lambda$ at $x_0$, we already know from (\ref{stein1}) that 
\beq\label{eq:Heat-Content-Upper-Bound}
\psi_{M \setminus \Omega_\lambda}(\frac{t_0}{\lambda}, x_0) \leq 1 - e^{-t_0}.
\eeq 

By the admissibility condition on $\Sigma_\lambda$ we know that $x_0$ has a unique metric projection on one and only one $\Sigma_\lambda^{i_{x_0}}$ from the collection $\Sigma_\lambda$.

Now, suppose the result is not true. Choose $R, t_0$ small such that Theorem \ref{compare} applies. Choosing $r_0$ sufficiently smaller than $R$, we can find a $\lambda$ such that $\Omega_\lambda$ is contained in a $\frac{r_0}{\sqrt{\lambda}}$-tubular neighbourhood of $\Sigma_\lambda$, denoted by $N_{r_0\lambda^{-1/2}} (\Sigma_\lambda)$. 
From the remarks after Definition \ref{adm}, it follows that $\Omega_\lambda \subseteq N_{r_0\lambda^{-1/2}} (\Sigma_\lambda^{i_{x_0}})$.

We start a Brownian motion at $x_0$ and, roughly speaking, we see that locally the particle has freedom to wander in $n - k$ ``bad directions'', namely the directions normal to $\Sigma_\lambda^{i_{x_0}}$, before it hits $\pa \Omega_\lambda$. That means, we may consider a $(n - k)$-dimensional Brownian motion $B(t)$ starting at $x_0$; see the following diagram:

\includegraphics{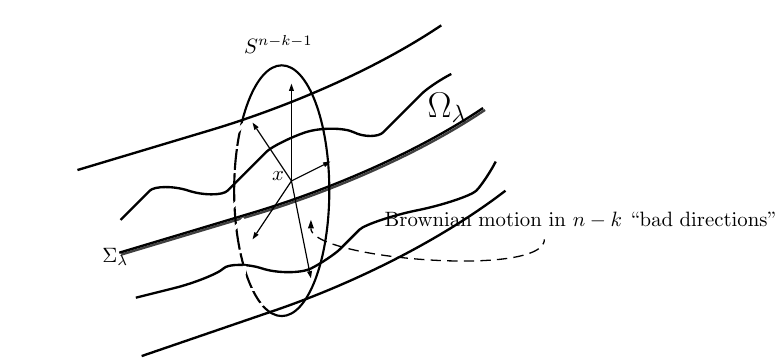}\label{diag1}
\\
\\

More formally, we choose a normal coordinate chart $(U, \phi)$ around $x_0$ adapted to $\Sigma_\lambda^{i_{x_0}}$, where the metric is comparable to the Euclidean metric. We have that $\phi(\Sigma_\lambda^{i_{x_0}}) = \phi(U) \cap \{ \mathbb{R}^k \times \{ 0 \}^{n-k} \} $ and $\phi(N_{r_0\lambda^{-1/2}}(\Sigma_\lambda^{i_{x_0}})) = \phi(U) \cap \{ \mathbb{R}^{k} \times [-\frac{r_0}{\sqrt{\lambda}}, \frac{r_0}{\sqrt{\lambda}}]^{n - k} \}$. 
We take a geodesic ball $B \subset U \subset M$ at $x_0$ of radius $\frac{R}{\sqrt{\lambda}}$. Using the hitting probability notation from Section \ref{Int} and monotonicity with respect to set inclusion we have
\begin{equation}\label{12}
	\psi_{M \setminus \Omega_\lambda}\left(\frac{t_0}{\lambda}, x_0\right) \geq \psi_{B \setminus \Omega_\lambda} \left(\frac{t_0}{\lambda}, x_0\right) \geq \psi_{B \setminus N_{r_0\lambda^{-1/2}}(\Sigma_\lambda^{i_{x_0}})} \left(\frac{t_0}{\lambda}, x_0\right),
\end{equation}
and the comparability lemma implies that, if $c = \frac{t_0}{R^2}$, then there exists a constant $C$, depending on $c$ and $M$, such that 

\begin{equation}
	\psi_{B \setminus N_{r_0\lambda^{-1/2}}(\Sigma_\lambda^{i_{x_0}})}\left(\frac{t_0}{\lambda}, x_0\right) \geq C \psi^e_{\phi(B \setminus N_{r_0\lambda^{-1/2}}(\Sigma_\lambda^{i_{x_0}}))} \left(\frac{t_0}{\lambda}, \phi(x_0)\right),
\end{equation}
where $\psi^e$ denotes the hitting probability in Euclidean space. 
We denote $N_{r_0\lambda^{-1/2}}^e := \phi(N_{r_0\lambda^{-1/2}}(\Sigma_\lambda^{i_{x_0}}))$.

Let us consider the ``solid cylinder'' $S = B_\frac{R}{\sqrt{\lambda}} \times B_\frac{r_0}{\sqrt{\lambda}}$, a product of $k$ and $n-k$ dimensional Euclidean balls centered at $\phi(x_0)$. $S$ is clearly the largest cylinder contained in $ N^e_{r_0\lambda^{-1/2}} \cap B$. We denote $S = B_1 \times B_2$ for convenience. 
By monotonicity,


\beq
 \psi^e_{\phi(B \setminus N_{r_0\lambda^{-1/2}}(\Sigma_\lambda^{i_{x_0}}))} \left(\frac{t_0}{\lambda}, \phi(x_0)\right) \geq  \psi^e_{B_1 \times \pa B_2 }\left(\frac{t_0}{\lambda}, \phi(x_0)\right).
\eeq

If $B(t) = (B_1(t),..., B_n(t))$ is an $n$-dimensional Brownian motion, the components $B_i(t)$'s are independent Brownian motions (see, for example, Chapter 2 of ~\cite{MP}). Denoting by $\Cal{B}_k(t)$ and $\Cal{B}_{n - k}(t)$ the projections of $B(t)$ onto the first $k$ and last $n - k$ components respectively, it follows that
\begin{align*}
	\psi^e_{B_1 \times \pa B_2 }\left(\frac{t_0}{\lambda}, \phi(x_0)\right) & \geq \mathbb{P}(\sup_{0 \leq s \leq t_0\lambda^{-1}} \V \Cal{B}_{k}(t)\V \leq \frac{R}{\sqrt{\lambda}}).\mathbb{P}(\sup_{0 \leq s \leq t_0\lambda^{-1}} \V \Cal{B}_{n-k}(t)\V \geq \frac{r_0}{\sqrt{\lambda}})\\
	& \geq c_k \mathbb{P}(\sup_{0 \leq s \leq t_0\lambda^{-1}} \V \Cal{B}_{n-k}(t)\V \geq \frac{r_0}{\sqrt{\lambda}}),
\end{align*}
where $c_k$ is a constant depending on $k$ and the ratio $t_0/R^2$, and can be calculated explicitly from (\ref{JKF1}).

Using the estimate in Section \ref{Int}, we may take $r_0 \leq R$ sufficiently small so that
\beq\label{16}
	\mathbb{P}(\sup_{0 \leq s \leq t_0\lambda^{-1}} \V \Cal{B}_{n-k}(t)\V \geq 	\frac{r_0}{\sqrt{\lambda}}) > 1 - \varepsilon,
\eeq
where  $\varepsilon$ is sufficiently small. Keeping $c = \frac{t_0}{R^2}$ and (hence) $C$ fixed, we take $t_0$ small enough and $r_0 \leq R$ appropriately, so that (\ref{16}) contradicts (\ref{12}) and the fact that $\psi_{M \setminus \Omega_\lambda}(t_0\lambda^{-1}, x) \leq 1 - e^{-t_0}$.

\end{proof}
\begin{remark}
Note that the constant $r_0$ above is independent of $\Sigma_\lambda$; in other words, the same constant $r_0$ will work for Theorem \ref{babycase} as long as the surface is admissible up to a wavelength distance. Indeed, this results from the fact that $r_0$ depends only on the diffusion process associated to the Brownian motion, and is an inherent property of the manifold itself. 
\end{remark}

Now we address the generalizations of Theorem \ref{babycase} for collections $\Sigma_\lambda$ which are more complicated, namely  
we assume $\Sigma_\lambda$ is a  $\alpha $-admissible collection in the sense of Definition \ref{def:alpha-adm-col}.

\begin{proof}[Proof of Theorem \ref{stein}]
By assumption, we have an $ \alpha $-admissible collection $\Sigma_\lambda := \cup^{m_\lambda}_{i = 1} \Sigma^i_\lambda$.

Let us assume the contrary - if the statement is not true, we may select an arbitrarily small $ r_0 > 0 $ and find a corresponding inscribed nodal domain $\Omega_\lambda \subset N_{r_0 \lambda^{-1/2}}(\Sigma_\lambda)$.

As before, we choose a point $ x_0 \in \Omega_\lambda $ such that
$ \varphi_\lambda (x_0) = \max_{x \in \Omega_\lambda} |\varphi_\lambda|$. Monotonicity of the hitting probability function $\psi_K(.,.)$ with respect to set inclusion in $K$, as well as the $\alpha$-admissibility imply that

\begin{align}\label{t0r0}
	\psi_{M \setminus \Omega_\lambda}(t, x_0)
	& \geq \psi_{B(x_0, 2r_0 \lambda^{-1/2}) \setminus \Omega_\lambda }(t,x_0) \\ \nonumber
	& \geq \psi_{B(x_0, 2r_0 \lambda^{-1/2}) \setminus N_{r_0 \lambda^{-1/2}}(\Sigma_\lambda)}(t, x_0) \\ \nonumber
	& = \psi_{\partial \left( B(x_0, 2r_0 \lambda^{-1/2}) \setminus N_{r_0 \lambda^{-1/2}}(\Sigma_\lambda) \right)}(t, x_0) \\ \nonumber
	& \geq \psi_{\partial B(x_0, 2r_0 \lambda^{-1/2}) \setminus N_{r_0 \lambda^{-1/2}}(\Sigma_\lambda)}(t, x_0) \\ \nonumber
	& \geq \alpha \psi_{\partial B(x_0, 2r_0 \lambda^{-1/2})}(t, x_0),
\end{align}
where we introduce the constant $ \alpha > 0 $ coming from the $ \alpha $-admissibility condition. Moreover, following Definition \ref{def:Alpha-Admissibility} of $ \alpha $-admissibility, in (\ref{t0r0}) we also assume that the radius $ \frac{r_0}{\sqrt{\lambda}} $ is sufficiently small and that  $t := \frac{t_0}{\lambda}$ with $ t_0 := 4r_0^2 $. 

\includegraphics{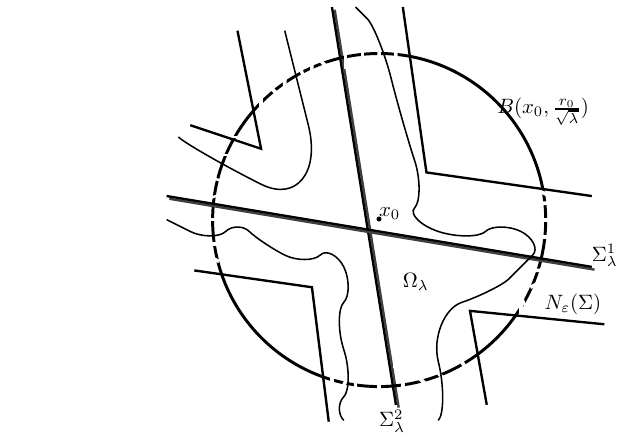}\label{diag2}

The latter estimate $ (\ref{t0r0}) $ implies, in particular, that

\beq\label{t0r01}
	\frac{\psi_{M \setminus \Omega_\lambda}(t, x_0)}{ \psi_{M \setminus B(x_0, 2r_0 \lambda^{-1/2})}(t, x_0)} = \frac{\psi_{M \setminus \Omega_\lambda}(t, x_0)}{\psi_{\partial B(x_0, 2r_0 \lambda^{-1/2})}(t, x_0)} \geq \alpha.
\eeq

We now observe that by setting $t = \frac{t_0}{\lambda}$ we still have the freedom to choose $ t_0 $. We show that we can select $t_0$ such that (\ref{t0r01}) is violated. To this end we observe that the upper bound (\ref{eq:Heat-Content-Upper-Bound}) along with (\ref{JKF1}) and Theorem \ref{compare} give:

\begin{align}\label{wtf}
	\frac{\psi_{M \setminus \Omega_\lambda}(\frac{t_0}{\lambda}, x)}{\psi_{M \setminus B(x_0, 2r_0 \lambda^{-1/2})}(\frac{t_0}{\lambda}, x)} & \lesssim \frac{1 - e^{-t_0}}{1 - \frac{1}{2^{\nu - 1}\Gamma (\nu + 1)}\sum^\infty_{k = 1}\frac{j_{\nu, k}^{\nu - 1}}{J_{\nu + 1}(j_{\nu, k})}e^{-\frac{j^2_{\nu, k}t_0}{2r_0^2}}} 
	\\ & = \frac{1 - e^{-t_0}}{1 - \frac{1}{2^{\nu - 1}\Gamma (\nu + 1)}\sum^\infty_{k = 1}\frac{j_{\nu, k}^{\nu - 1}}{J_{\nu + 1}(j_{\nu, k})}e^{-2j^2_{\nu, k}}} \nonumber\\ & = \frac{1 - e^{-t_0}}{\tilde{C}}\nonumber.
\end{align}

Now, we choose $ t_0 = 4r_0^2 $ small enough, so the last estimate yields a contradiction with (\ref{t0r01}). This proves the theorem. 
\end{proof}

\begin{remark}
	We wish to comment that in the above proof, it is not essential to look at the nodal domain only around the maximum point $x_0$. Given a pre-determined positive constant $\beta$, choose a point $y \in \Omega_\lambda$ such that $\varphi_\lambda (y) \geq \beta \varphi_\lambda (x_0)$. Arguing similarly as in (\ref{stein1}), we see that $\psi_{M \setminus \Omega_\lambda}(t, y) \leq 1 - \beta e^{-t_0}$. Following the computations in (\ref{wtf}), we get a constant $r_0$ (depending on $\beta$) such that $
	\frac{1 - \beta e^{-t_0}}{\tilde{C}} < \alpha$, giving a contradiction. Also, it is clear that in Definitions \ref{adm} and \ref{def:Alpha-Admissibility}, we do not actually need the submanifolds in the family $\Sigma_\lambda$ to be smooth, and the proofs of Theorems \ref{babycase} and \ref{stein} work with submanifolds of much lower regularity (for example, $C^1$ submanifolds).  
\end{remark}

\section{Large ball at a max point}\label{LBMP}

In this section we discuss the asymptotic thickness of nodal domains around extremal points of eigenfunctions. More precisely, let us consider a fixed nodal domain $ \Omega_\lambda $ corresponding to the eigenfunction $ \varphi_\lambda $.
Let $ x_0 \in \Omega_\lambda  $ be such that
\begin{equation}
	\varphi_\lambda (x_0) = \max_{x \in \Omega_\lambda} |\varphi_\lambda|.
\end{equation}

In the case $ \dim M = 2$, it was shown in Section 3 of \cite{Man1} that at such maximal points $ x_0 $ one can
fully inscribe a large ball of wavelength radius (i.e $ \sim \frac{1}{\sqrt{\lambda}} $)  into the nodal domain. 
In other words for Riemannian surfaces, one has that
\begin{equation}
	\frac{C_1}{\sqrt{\lambda}} \leq \text{inrad }(\Omega_\lambda) \leq \frac{C_2}{\sqrt{\lambda}},
\end{equation}
where $ C_i $ are constants depending only on $ M $. Note that the proof for this case, as carried out in \cite{Man1} by following ideas in ~\cite{NPS}, makes use of essentially $2$-dimensional tools (conformal coordinates and quasi-conformality), which are not available in higher dimensions.

To our knowledge, in higher dimensions the sharpest known bounds on the inner radius of a nodal domain appear in ~\cite{Man2} (Theorem 1.5) and state that:
\begin{equation} \label{eq:Mangoubi-Inrad}
	\frac{C_1}{\lambda^{\alpha(n)}} \leq \text{inrad }(\Omega_\lambda) \leq \frac{C_2}{\sqrt{\lambda}},
\end{equation}
where $ \alpha (n) := \frac{1}{4}(n-1)+\frac{1}{2n} $. A question of current investigation is whether the last lower bound on $ \text{inrad }(\Omega_\lambda) $ in higher dimensions is optimal. 

Here we exploit heat equation and Brownian motion techniques to show that at least, one can expect to ``almost'' inscribe a large ball having radius to the order of $\frac{1}{\sqrt{\lambda}}$, in all dimensions. 
	
Now we prove Theorem \ref{th:Large-Inscribed-Ball}:

\begin{proof}

	We denote $ t' := \frac{t_0}{\lambda} $, and thus $ \psi_{M \setminus \Omega_\lambda} (t',x) \leq 1 - e^{-t_0} $, where $t_0$ is a small constant to be chosen suitably later. 
	 
	Now, choosing $ t_0 $ small enough, and using monotonicity, we have,
	
	\begin{equation}\label{21}
		\psi_{B(x_0, r_0 \lambda^{-1/2}) \setminus \Omega_\lambda}(t, x_0) < \psi_{M \setminus \Omega_\lambda}(t, x_0) < \epsilon.
	\end{equation}
	
	For convenience, let us denote $E_{r_0} := B(x_0, r_0 \lambda^{-1/2}) \backslash \Omega_\lambda $ - a relatively compact set. Observe that Theorem \ref{compare} applies to open balls and compact subsets contained in open balls. To adapt to the setting of Theorem \ref{compare}, choose a number $r_0^{'} < r_0$ such that $B(x_0, r'_0 \lambda^{-1/2})$ satisfies $$\frac{\text{Vol}\left(B(x_0, r_0 \lambda^{-1/2}) \setminus B(x_0, r'_0 \lambda^{-1/2})\right)}{\text{Vol}\left(B(x_0, r_0 \lambda^{-1/2})\right)} < \epsilon.$$ 
	
	Call $E_{r_0^{'}} := \overline{E_{r_0} \cap B(x_0, r'_0 \lambda^{-1/2})}$. Observe that proving that $\frac{\text{Vol}(E_{r^{'}_0})}{\text{Vol}(B(x_0, r_0 \lambda^{-1/2}))} < \epsilon$ will imply that $\frac{\text{Vol}(E_{r_0})}{\text{Vol}(B(x_0, r_0 \lambda^{-1/2}))} < 2\epsilon$, which is what we want.
	
	Now, we would like to compare the volumes of the two sets $E_{r^{'}_0}$ and $B(x_0, r_0 \lambda^{-1/2})$.
	Let $r = \frac{r_0}{\sqrt{\lambda}}$. Recall from ~\cite{GS}, Remark 4.1, the following inequality:
\beq\label{Maz}
	c\frac{\text{cap}(E_{r^{'}_0})r^2}{\text{Vol}(B(x_0, r_0 \lambda^{-1/2}))}e^{-C\frac{r^2}{t'}} \leq \psi_{E_{r_0}}(t', x_0) < \epsilon,
\eeq
where $\text{cap}(K)$ denotes the $2$-capacity of the set $K \subset M$, and $0 < t^{'} < 2r^2 $ (see also Equation (3.20) of ~\cite{GS}). Recall that the $2$-capacity of a set $K \subset M$ is defined as 
$$
\text{cap}(K) = \inf_{\eta|_K \equiv 1, \eta \in C^\infty(M)} \int_M |\nabla \eta|^2 dM.$$
Formally, (\ref{Maz}) holds on complete non-compact non-parabolic manifolds, which includes $\RR^n, n \geq 3$. 
For bringing in our comparability result Theorem \ref{compare}, we fix the ratio $\frac{t'}{r^2} = \frac{1}{3}$, say, and then choose $t_0$ small enough that (\ref{21}) still works. Now (\ref{Maz}) applies, albeit with a new constant $c$ as determined by the ratio $t/r^2$ and Theorem \ref{compare}. 


Now, to rewrite the capacity term in (\ref{Maz}) in terms of volume, we bring in the following ``isocapacitary inequality'' (see ~\cite{Maz}, Section 2.2.3):
\beq\label{II}
\text{cap}(E_{r_0}) \geq C'\text{Vol}(E_{r_0})^{\frac{n - 2}{n}}, n \geq 3,
\eeq
where $C'$ is a constant depending only on the dimension $n$. We note that the isocapacitary inequality (in combination with a suitable Poincare inequality) lies at the heart of the currently optimal inradius estimates, as derived by Mangoubi in \cite{Man2}.

Clearly, (\ref{Maz}) and (\ref{II}) together give
\beq \label{eq:Volume-Ratio}
\left( \frac{\text{Vol}(E_{r_0})}{\text{Vol}(B(x_0, r_0 \lambda^{-1/2}))} \right)^{\frac{n-2}{n}} \lesssim \frac{\text{cap}(E_{r_0})r^2}{\text{Vol}(B(x_0, r_0 \lambda^{-1/2}))} \lesssim \psi_{E_{r_0}}(t, x) < \epsilon.
\eeq
The last inequalities contain constants depending only on $ M $, so by taking $ \epsilon $ even smaller we can arrange $\frac{\text{Vol}(E_{r_0})}{\text{Vol}(B(x_0, r_0 \lambda^{-1/2}))} < \epsilon_0$ for any initially given $ \epsilon_0 $.
\end{proof}
\begin{remark}
We note that the heat equation method does not distinguish between a general domain and a nodal domain. This means that we cannot rule out the situation where $B(x_0, \frac{r_0}{\sqrt{\lambda}}) \setminus \Omega_\lambda$ is a collection of ``sharp spikes'' entering into $B(x_0, \frac{r_0}{\sqrt{\lambda}})$. Indeed the probability of a Brownian particle hitting a spike, no matter how ``thin'' it is, or how far from $x_0$ it is, is always non-zero, a fact related to the infinite speed of propagation of heat diffusion. This is consistent with the heuristic discussed in ~\cite{H} and ~\cite{L}.
\end{remark}

Now we establish Corollary \ref{cor:Large-Ball}. First, we recall the following result, which gives a bound on the asymmetry between the volumes of positivity and negativity sets, as developed in \cite{Man2}:
	\begin{theorem}{\cite{Man2}} \label{th:Asymmetry}
		Let $ B $ be a geodesic ball, so that $ \left( \frac{1}{2}B \cap \{ \varphi_\lambda = 0 \} \right) \neq \emptyset$ with $ \frac{1}{2} B $ denoting the concentric ball of half radius. Then
		\begin{equation}\label{29}
			\frac{\Vol(\{\varphi_\lambda > 0 \}\cap B)}{\Vol(B)} \geq \frac{C}{\lambda^\frac{n-1}{2}}.
		\end{equation}
	\end{theorem}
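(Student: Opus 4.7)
The plan is to connect the asymmetry question to the doubling exponent of $\varphi_\lambda$, which is controlled by $\sqrt{\lambda}$ via the Donnelly-Fefferman inequality, and then invoke a frequency-based asymmetry principle for near-harmonic functions.

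First, recall the Donnelly-Fefferman doubling inequality: for any ball $B$ of radius $r$ smaller than a geometric constant, one has $\sup_{2B} |\varphi_\lambda| \leq 2^{C\sqrt{\lambda}\, r} \sup_{B} |\varphi_\lambda|$. This makes precise the idea that on $B$, $\varphi_\lambda$ behaves like a polynomial of degree $N \sim r\sqrt{\lambda}$ (equivalently, it has Almgren frequency $\lesssim r\sqrt{\lambda}$). Pick a nodal point $p \in \tfrac{1}{2}B \cap \{\varphi_\lambda=0\}$ and rescale in normal coordinates around $p$ by $\sqrt{\lambda}$, so that $u(y) := \varphi_\lambda(\exp_p(y/\sqrt{\lambda}))$ solves $\Delta u + u = 0$ (up to small metric perturbation) on a ball of radius $\sim r\sqrt{\lambda}$, vanishes at the origin, and has frequency bounded by $r\sqrt{\lambda}$.

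Second, apply a quantitative asymmetry principle for near-harmonic functions of bounded frequency: if $u$ satisfies $\Delta u + \mu u = 0$ in a unit ball with $\mu$ bounded, vanishes at the center of the concentric half-ball, and has frequency $N$, then $\Vol(\{u>0\}\cap B_1)/\Vol(B_1) \geq c/N^{n-1}$. Such a bound can be produced by combining a Poincar\'e-type inequality on the positivity component, a Faber-Krahn lower bound for its first Dirichlet eigenvalue, and a Remez-type estimate controlling the density of zeros via the frequency. Pulling back via the rescaling by $\sqrt{\lambda}$ and plugging in $N \sim r\sqrt{\lambda}$ then yields $\Vol(\{\varphi_\lambda>0\}\cap B)/\Vol(B) \gtrsim (r\sqrt{\lambda})^{-(n-1)}$, which for $r$ bounded below by a geometric constant is of order $\lambda^{-(n-1)/2}$.

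The main obstacle will be the quantitative asymmetry estimate for near-harmonic functions of given frequency. A direct Rayleigh quotient argument using $\varphi_\lambda^+$ as a test function on the positivity set gives only the weaker exponent $n/2$ (one essentially gets $\lambda_1^D(\{\varphi_\lambda>0\} \cap B) \leq \lambda$ combined with Faber-Krahn); to reach the sharper $(n-1)/2$ one must exploit the $(n-1)$-dimensional structure of the nodal set inside $\tfrac{1}{2}B$. Mangoubi's approach couples the Faber-Krahn inequality with a Poincar\'e inequality adapted to the nodal set, where the Hausdorff dimension of the vanishing locus enters explicitly into the capacity-type estimates that govern the $1/N^{n-1}$ trade-off.
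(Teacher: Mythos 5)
This result is quoted verbatim from Mangoubi \cite{Man2} and is used as a black box in Section \ref{LBMP} to derive Corollary \ref{cor:Large-Ball}; the present paper does not reprove it, so there is no in-paper proof against which to compare. I can nonetheless assess your proposal on its own terms as an attempted proof of Mangoubi's theorem.

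Your high-level decomposition --- first a Donnelly--Fefferman doubling bound reducing matters to a function of frequency $N \lesssim \sqrt{\lambda}$ on a unit-scale ball, then a quantitative asymmetry estimate of order $N^{-(n-1)}$ for near-harmonic functions of frequency $N$ --- is the right skeleton, and you have correctly located where the exponent $(n-1)/2$ has to come from. The difficulty is that the second step, your ``quantitative asymmetry principle for near-harmonic functions of bounded frequency,'' is not an off-the-shelf fact; modulo the standard frequency bound, it essentially \emph{is} the theorem. You flag this yourself as ``the main obstacle,'' but the ingredients you offer in its place do not assemble into a proof. As you concede, the Rayleigh-quotient/Faber--Krahn pathway lands only at the exponent $n/2$; the gain of $1/2$ must come from a concrete mechanism, and an unspecified ``Remez-type estimate controlling the density of zeros via the frequency'' together with an unspecified ``Poincar\'e inequality adapted to the nodal set'' does not supply it. A Remez inequality compares sup-norms on subsets of positive measure with sup-norms on the ball in terms of a polynomial degree; it does not by itself produce a \emph{volume} lower bound for the positivity set, and you do not explain what object plays the role of the degree here nor how the $(n-1)$-dimensionality of the vanishing locus is converted into the required volume estimate.

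For orientation: the technical core of Mangoubi's argument is a ``rapid growth in narrow domains'' lemma for (near-)harmonic functions --- if the positivity set in $B$ has relative volume $\epsilon$, then $\sup u^{+}$ on a concentric sub-ball is smaller than $\sup u^{+}$ on $B$ by a factor of order $2^{-c\,\epsilon^{-1/(n-1)}}$. Comparing this forced growth against the Donnelly--Fefferman ceiling $2^{C\sqrt{\lambda}}$ yields $\epsilon \gtrsim \lambda^{-(n-1)/2}$, with the exponent $1/(n-1)$ inside the growth lemma (where the entire improvement over Faber--Krahn resides) coming from a delicate comparison of how the positivity set meets concentric $(n-1)$-dimensional spheres, not from a capacity-based Poincar\'e inequality or a Remez bound. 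Since your sketch neither proves nor correctly identifies that mechanism, the gap is genuine.
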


\begin{proof}[Proof of Corollary \ref{cor:Large-Ball}]
	It suffices to combine the estimate (\ref{eq:Volume-Ratio}) with (\ref{29}).
	
	Let $ r:= \frac{r_0}{\sqrt{\lambda}} $ be the radius of the largest inscribed ball in the nodal domain at $ x_0 $. Noting that $\{ \varphi_\lambda < 0\} \subseteq  E_{r_0}$ and combining Theorem \ref{th:Asymmetry} for $ B_{x_0}(2r) $ with (\ref{eq:Volume-Ratio}), we get:
	\begin{equation}
		\left( \frac{C}{\lambda^\frac{n-1}{2}} \right)^{\frac{n-2}{n}} \leq \left( \frac{\text{Vol}(E_{r_0})}{\text{Vol}(B(x_0, r_0 \lambda^{-1/2}))} \right)^{\frac{n-2}{n}} \leq 1 - e^{-\sqrt{1/3} r_0^2}
	\end{equation}
	Expanding the right hand side in Taylor series and rearranging finishes the proof.
\end{proof}

\begin{remark}
	An inspection of the proof of Theorem \ref{th:Large-Inscribed-Ball} reveals that one can take $\epsilon = r_0^{\frac{2n}{n - 2}}$. In other words, the relative volume of the error set $E_{r_0}$ decays as $r_0^{\frac{2n}{n - 2}}$ as $r_0 \to 0$. This is slightly better than the scaling prescribed by Corollary 2 of \cite{L}.
\end{remark}

\begin{remark}
	There is a sizeable literature around optimizing the fundamental frequency of the complement of an obstacle inside a domain (for example, see \cite{HKK} and references therein). As an explicit special case, consider a convex domain $\Omega \subset \RR^n$ and a small ball $B \subseteq \Omega$. The question is to find possible placements of translate $x + B$ inside $\Omega$ such that $\lambda_1(\Omega \setminus (x + B))$ is maximized. For certain applications of Theorem \ref{th:Large-Inscribed-Ball} towards such questions, we refer to \cite{GM1}. 
\end{remark}

\end{document}